\DeclareFontFamily{OT1}{pzc}{}
\DeclareFontShape{OT1}{pzc}{m}{it}{<-> s * [1.10] pzcmi7t}{}
\DeclareMathAlphabet{\mathpzc}{OT1}{pzc}{m}{it}
\newcommand{\cli}{\mathpzc{i}}
\newcommand{\clr}{\mathpzc{r}}
\def\mint{\int \mspace{-21.mu}\raise .8ex\hbox{\rotatebox{-80}{$|$}\,}}
\def\smint{\smallint \mspace{-13.mu}\raise -.1ex\hbox{\rotatebox{10}{--}}}
\def\beq{\begin{equation}}
\def\eeq{\end{equation}}
\def\<{\langle}
\def\>{\rangle}
\def\ie{{\em i.e. }}
\def\Chi{\raise .3ex \hbox{\large $\chi$}} 
\def\dd{\, {\rm d}} 
\def\tb{\hbox{$\|\kern -.09em |$}}
\def\bigtb{\hbox{$\big\|\kern -.09em \big|$}}
\def\Bigtb{\hbox{$\Big\|\kern -.09em \Big|$}}
\def\gx{\bx}  
\def\NN{\mathbbm{N}}
\def\RR{\mathbbm{R}}
\newcommand{\mat}[1]{\mathbb{#1}}
\def\matd{\mathbb{d}}
\def\matr{\mathbb{r}}
\def\matC{\mathbb{C}}
\def\matD{\mathbb{D}}
\def\matG{\mathbb{G}}
\def\matH{\mathbb{H}}
\def\matrh{\mathbb{h}}
\def\matI{\mathbb{I}}
\def\matJ{\mathbb{J}}
\def\matO{\mathbb{O}}
\def\matR{\mathbb{R}}
\def\matS{\mathbb{S}}
\def\matW{\mathbb{W}}
\newcommand\arrg{\textbf{\textsf{g}}}
\newcommand\arrA{{\bm{\mathsf{A}}}}
\newcommand\arrB{{\bm{\mathsf{B}}}}
\newcommand\arrD{{\bm{\mathsf{D}}}}
\newcommand\arrE{{\bm{\mathsf{E}}}}
\newcommand\arrF{{\bm{\mathsf{F}}}}
\newcommand\arrH{{\bm{\mathsf{H}}}}
\newcommand\arrf{{\textbf{\textsf{f}}}}
\newcommand\arrG{{\bm{\mathsf{G}}}}
\newcommand\arrJ{{\bm{\mathsf{J}}}}
\newcommand\arrU{{\bm{\mathsf{U}}}}
\newcommand\arrV{{\bm{\mathsf{V}}}}
\newcommand\arrX{{\bm{\mathsf{X}}}}
\newcommand\arrphi{{\bm{\mathsf{\phi}}}}
\newcommand\arrpsi{{\bm{\mathsf{\psi}}}}
\newcommand\arrrho{{\bm{\mathsf{\rho}}}}
\def\cC{\mathcal{C}}
\def\cH{\mathcal{H}}
\def\cI{\mathcal{I}}
\def\cL{\mathcal{L}}
\def\cO{\mathcal{O}}
\def\cR{\mathcal{R}}
\newcommand\uvec{{\be}}
\def\bb{{\bs b}}
\def\be{{\bs e}}
\def\bsm{{\bs m}}
\def\bv{{\bs v}}
\def\bx{{\bs x}}
\def\bA{{\bs A}}
\def\bB{{\bs B}}
\def\bD{{\bs D}}
\def\bE{{\bs E}}
\def\bF{{\bs F}}
\def\bG{{\bs G}}
\def\bH{{\bs H}}
\def\bJ{{\bs J}}
\def\bM{{\bs M}}
\def\bV{{\bs V}}
\def\bX{{\bs X}}
\def\bs{\boldsymbol}
\def\dt{{\partial_{t}}}
\DeclareMathOperator{\diag}{diag}
\DeclareMathOperator{\Div}{div}
\DeclareMathOperator{\curl}{curl}
\DeclareMathOperator{\grad}{grad}
\providecommand{\range}[2]{\llbracket #1, #2 \rrbracket}
\newcommand{\fracp}[2]{\ensuremath{\frac{\partial #1}{\partial #2}}}
\newcommand{\half}{\frac{1}{2}}
\newcommand{\hi}{\hat{i}}
\newcommand{\hj}{\hat{j}}
\newcommand{\hk}{\hat{k}}
\newcommand{\hodgeTwo}{\matH_2}
\newcommand{\hodgeThree}{\matH_3}
\newcommand{\hodgeDualTwo}{\tilde{\matH}_2}
\newcommand{\hodgeDualThree}{\tilde{\matH}_3}
\newtheorem{theorem}{Theorem}
\newtheorem{lemma}{Lemma}
\newtheorem{remark}{Remark}
\newtheorem{definition}{Definition}
\newtheorem{assumption}{Assumption}
\newtheorem{proof}{Proof}
\title{A dual grid geometric electromagnetic particle in cell method}
\author[1]{Katharina Kormann}
\author[2,3]{Eric Sonnendr\"ucker}
\affil[1]{Ruhr University Bochum, Department of Mathematics, Bochum, Germany}
\affil[2]{Max Planck Institute for Plasma Physics, Garching, Germany}
\affil[3]{Technical University of Munich, Department of mathematics, Garching, Germany}
\date{}
\begin{document}

\maketitle

\begin{abstract}
Geometric particle-in-cell discretizations have been derived based on a discretization of the fields that is conforming with the de Rham structure of the Maxwell's equation and a standard particle-in-cell ansatz for the fields by deriving the equations of motion from a discrete action principle. While earlier work has focused on finite element discretization of the fields based on the theory of Finite Element Exterior Calculus, we propose in this article an alternative formulation of the field equations that is based on the ideas conveyed by mimetic finite differences. The needed duality being expressed by the use of staggered grids. We construct a finite difference formulation based on degrees of freedom defined as point values, edge, face and volume integrals on a primal and its dual grid. Compared to the finite element formulation no mass matrix inversion is involved in the formulation of the Maxwell solver. In numerical experiments, we verify the conservation properties of the novel method and study the influence of the various parameters in the discretization.
\end{abstract}


\section{Introduction}

A kinetic description of a plasma in external and self-con\-sist\-ent fields is given by the Vlasov equation for the particle distribution functions coupled to Maxwell's equation. Numerical schemes that preserve the structure of the kinetic equations can provide new insights into the long time behavior of fusion plasmas. 
In \cite{kraus2016gempic, CPKS_variational_2020} we developed a geometric Particle in Cell (PIC) method, where the fields were computed in compatible Finite Element spaces forming a discrete de Rham complex. In this paper, we will introduce a discrete de Rham complex where the unknowns are what we called geometric degrees of freedom, \textit{i.e.} point values, edge integrals, face integrals and volume integrals.
Moreover, whereas the duality inherent in Maxwell's equations, is naturally expressed as a $L^2$ duality for Finite Elements, we will here use a duality based on staggered grids. We consider a primal grid and its dual grid whose vertices are the barycenters of the cells. We restrict ourselves to Cartesian grids in this paper, but the construction could be extended to arbitrary grids.

This yields a generalized Finite Difference formulation that generalizes the classical second-order Yee scheme. 
Here the discrete differential and Hodge operators will be matrices acting on the appropriate degrees of freedom. Moreover, we will need to also define an interpolation operator from the discrete level to the continuous spaces. These constructions are based on the principles of mimetic discretization as presented by Bochev and Hyman \cite{bochev2006principles}.
For this purpose, we will use a representation of the solution based on sliding Lagrange polynomials. Based on this representation of the fields and a standard particle-in-cell discretization of the distribution function, we formulate a semi-discrete Lagrangian from which we derive the semi-discrete equations of motion for the Vlasov--Poisson system from a semi-discrete variational principle. We show that the resulting system preserves the Jacobi-identity, the divergence constraints as well as energy on the semi-discrete level. Finally, we use a Poisson integrator based on a Hamiltonian splitting as proposed in \cite{Crouseilles:2015, Qin:2015, kraus2016gempic} to discretize in time. 
The algorithm was implemented based on the AMReX library \cite{AMReX_JOSS} and numerical illustrations show the good conservation properties of our algorithm.

\section{Vlasov--Maxwell system}

The (nonrelativistic) Vlasov--Maxwell system describes the dynamics of plasma particles in their self-consistent and external electromagnetic fields in a kinetic picture where the particles of one species $s$ are represented by the distribution function $f_s$ in phase space $(\bx,\bv) \in \Omega \times \RR^3$, $\Omega \subset \RR^3$, which evolves according to the Vlasov equation
\begin{equation*}
	\dt f_s(t, \bx, \bv) + \bv \cdot \nabla_\bx f_s(t, \bx, \bv)
    + \frac{q_s}{m_s}\big( \bE(t,\bx) + \bv \times \bB(t,\bx)   \big) \cdot \nabla_\bv f_s(t, \bx, \bv) = 0
\end{equation*}
where $m_s$ and $q_s$ denote the mass and charge of the particle species $s$ and $\bE$, $\bB$ denote the electric and magnetic fields. 
The Vlasov equation forms a hyperbolic conservation law and the distribution function is constant along the characteristic curves satisfying the following system of ordinary differential equations
\begin{equation} \label{traj}
\frac{\dd}{\dd t} \bX(t) = \bV(t),
\qquad
\frac{\dd}{\dd t} \bV(t) =  \frac{q_s}{m_s}\big( \bE(t, \bX(t)) + \bV(t) \times \bB(t, \bX(t))   \big) .
\end{equation}
Since the particles are charged, the dominating force is the Lorentz force governing the velocity advection. The self-consistent part of the electromagnetic fields obey the Maxwell's equations
\begin{equation} \label{Maxwell}
\begin{aligned}
&\dt \bD(t, \bx) = \curl \bH(t, \bx) - \bJ(t, \bx) \\
&\dt \bB(t, \bx) = - \curl \bE(t, \bx)\\
&\Div \bD(t, \bx) = \rho(t, \bx) \\
&\Div \bB(t, \bx) = 0
\end{aligned}
\end{equation}
which are here posed in the four-field formulation where the electric field is described by $\bE$ and $\bD$, and the magnetic field by $\bH$ and $\bB$. The fields $\bD$ and $\bE$ as well as $\bH$ and $\bB$ are related through the constitutive relations of the material, i.e.~$\bD = \varepsilon \bE$, $\bH = \frac{1}{\mu}\bB$, where $\varepsilon$ denotes the permittivity and $\mu$ the permeability. The source terms are given as the charge and current density of the distribution functions
\begin{equation*}
	\rho(t, \bx) = \sum_s q_s \int_{\RR^3} f_s(t, \bx, \bv) \dd \bv,
  \qquad \bJ(t, \bx) = \sum_s q_s \int_{\RR^3} \bv f_s(t, \bx, \bv) \dd \bv.
\end{equation*}
The total energy in the system is given by
\begin{equation*}
	\cH =  \sum_s\frac{m_s}{2}\int_{\Omega} \int f_s(t,\bx,\bv) | \bv|^2 \, \dd \bv \dd \bx + \half \int \bE \cdot \bD \, \dd \bx + \half\int \bB \cdot \bH \, \dd \bx
\end{equation*}
and is conserved for suitable boundary conditions that keep the system closed, e.g. periodic boundary conditions as will be considered in this paper.

Low \cite{Low:1958.royal} formulated the Lagrangian of the system
\begin{equation}\begin{aligned} \label{cL}
  \cL 
  = \sum_s \int f_s(t_0,\bx_0,\bv_0) \left( \big(m_s \bV + q_s \bA(t,\bX)\big)\cdot \dot{\bX} - \Big(\frac{m_s}{2}\bV^2 +q_s\phi(t,\bX)\Big)\right) \dd \bx_0 \dd \bv_0 \\
  +\frac {1}{2} \int_{\Omega} \varepsilon(\bx) |\grad\phi(t,\bx) +\dot{\bA}(t,\bx)|^2 \dd \bx  - \frac {1}{2}\int_{\Omega}\frac{1}{\mu(\bx)} |\curl \bA(t,\bx)|^2  \dd \bx.
\end{aligned}\end{equation}
Here $\bA$ denotes the electromagnetic vector potential and $\bX=\bX(t_0;\bx_0,\bv_0)$, $\bV=\bV(t_0;\bx_0,\bv_0)$ denote the solutions at time $t_0$ of the characteristic equations of motion \eqref{traj} for initial conditions $(\bx_0,\bv_0)$. 
The equations \eqref{traj}--\eqref{Maxwell} can be derived by a variational principle from this Lagrangian. In section \ref{sec:discrete_lagrangian}, we will derive the semi-discrete equations of motion from a variational principle applied to a semi-discretized Lagrangian.

The Maxwell's equations themselves possess a geometric structure which is linked with the following de Rham complex (cf.~\cite{Hiptmair2002, ArnoldFEEC2018}):
\begin{equation*}\label{de Rham}
\begin{tikzpicture}[baseline=(current  bounding  box.center)]
  \matrix (m) [matrix of math nodes,row sep=3em,column sep=4em,minimum width=2em] {
           H^1(\Omega) & H(\curl,\Omega) & H(\Div,\Omega) & L^2(\Omega)
    \\
    };
  \path[-stealth]
    (m-1-1) edge node [above] {$\grad$} (m-1-2)
    (m-1-2) edge node [above] {$\curl$} (m-1-3)
    (m-1-3) edge node [above] {$\Div$}  (m-1-4)
    ;
\end{tikzpicture}
\end{equation*}
Let us consider $\bE,\bD$ as well as $\bB,\bH$ as two different mathematical objects with $\bE, \bH \in H(\curl,\Omega)$ and $\bD, \bB \in H(\Div,\Omega)$. Then, we can consider two de Rham complexes, one for $\bE, \bB$ and one for $\bD,\bH$ and group Maxwell's equation in two pairs where Faraday's law and the magnetic Gauss' law are linked to the first and Ampere's law and the electric Gauss law to the second de Rham complex. We consider the two complexes as dual complexes and relate the corresponding fields by the Hodge operator $\bE = \star \bD$ and $\bH = \star \bB$. This yields the following twin diagram
\begin{equation*}
\begin{tikzpicture}[baseline=(current  bounding  box.center)]
  \matrix (m) [matrix of math nodes,row sep=3em,column sep=3.6em,minimum width=2em] {
          \phi \in H^1(\Omega) & \bE, \bA \in H(\curl,\Omega) & \bB \in  H(\Div,\Omega) & L^2(\Omega)
    \\
               \tilde{\rho} \in  L^2(\Omega) & \tilde{\bD}, \tilde{\bJ} \in  H(\Div,\Omega) & \tilde{\bH} \in  H(\curl,\Omega) & H^1(\Omega)
    \\
    };
  \path[-stealth]
    (m-2-1) edge node [left]  {$\star$} (m-1-1)
    (m-1-1) edge node [above] {$\grad$} (m-1-2)
    (m-2-2) edge node [left]  {$\star$} (m-1-2)
    (m-2-2) edge node [above] {$-\Div$} (m-2-1)
    (m-2-3) edge node [left]  {$\star$} (m-1-3)
    (m-2-4) edge node [left]  {$\star$} (m-1-4)
    (m-1-2) edge node [above] {$\curl$} (m-1-3)
    (m-2-3) edge node [above] {$\curl$} (m-2-2)
    (m-1-3) edge node [above] {$\Div$}  (m-1-4)
    (m-2-4) edge node [above] {$-\grad$}  (m-2-3)
    ;
\end{tikzpicture}
\end{equation*}
Note that we have marked objects on the second complex, later referred to as the \emph{dual complex}, with a tilde as will be done throughout the rest of the paper.

Linked to this structure are the conservation properties of the Maxwell's equation: In particular, the divergence constraints remain satisfied over time (given suitable initial and boundary conditions) due to the following continuity equation, which holds between charge and current density
$$
\partial_t \rho + \nabla \cdot \bJ = 0.
$$
This continuity equation is obtained from the Vlasov equation by integration over velocity space.

\section{Mimetic finite differences}

In the previous section, we have seen that the electromagnetic fields live in functional spaces forming a de Rham diagram. Since the diagram is tightly linked to the conservation properties of the Maxwell's equations, a discretization that mimics the diagram discretely guarantees similar properties in the discrete setting. In particular, discrete spaces should be chosen for the various fields such that they are related to the continuous space through a commuting diagram based on well-chosen projections. The framework of Finite Element Exterior Calculus \cite{Arnold.Falk.Winther.2006.anum, Arnold.Falk.Winther.2010.bams} constructs a commuting diagram between the infinite dimensional solution spaces and the finite dimensional finite-element spaces and implements duality by using strong derivatives in one diagram and weak derivatives in the other. 

This section is devoted to a construction of a commuting de Rham diagram for discrete degrees of freedom to derive an entirely discrete formulation accessible to finite differences. For the discrete formulation, we consider restrictions from the continuous spaces to degrees of freedom. Traditionally for finite differences, point values are used as degrees of freedom. For mimetic finite differences, instead, we consider  four kinds of unknowns related to the geometric quantities on the grid: point values, edge integrals, face integrals and volume integrals. Duality is achieved by defining a primal grid for the first de Rham sequence and defining the degrees of freedom of the dual complex and its dual (that is \emph{staggered}) grid.  We note that these so-called \emph{geometric degrees of freedom} have also been introduced in \cite{CPKS_variational_2020} in the finite element setting as a finite representation of the functions in the finite-element spaces.

Following the conventions proposed in \cite{kreeft2011}, we denote by $\cR_i$, $i=0,1,2,3$, the reduction operators associating grid quantities to fields: $\mathcal{R}_0$ associates to a continuous function its values at all the vertices of the grid, $\mathcal{R}_1$ associates to a vector valued functions the circulations on all the edges, 
$\mathcal{R}_2$ associates to a vector valued function the integrals on all the fluxes through all face,  and
$\mathcal{R}_3$ associates to a scalar function its integrals on all the cells of the grid. 

Denoting by $N_0, N_1, N_2, N_3$ respectively the number of vertices, edges, faces and cells on the primal grid, we denote respectively by $\cC_0 = \mathbb{R}^{N_0}, \cC_1 = \mathbb{R}^{N_1}, \cC_2= \mathbb{R}^{N_2}, \cC_3= \mathbb{R}^{N_3}$ the vectors containing all the vertex, edge, face and cell degrees of freedom. Analogously, we denote the vectors containing the geometric degrees of freedom on the dual grid by $\tilde{\cC}_0 = \mathbb{R}^{\tilde{N}_0}, \tilde{\cC}_1= \mathbb{R}^{\tilde{N}_1}, \tilde{\cC}_2= \mathbb{R}^{\tilde{N}_2}, \tilde{\cC}_3= \mathbb{R}^{\tilde{N}_3}$.  We denote all objects related to the dual space with letters and symbols with tilde. 
With this notation, let us consider the following primal-dual continuous-discrete de Rham diagram:
\begin{equation}\label{CGL}
\begin{tikzpicture}[baseline=(current  bounding  box.center)]
  \matrix (m) [matrix of math nodes,row sep=3em,column sep=4em,minimum width=2em] {
           H^1(\Omega) & H(\curl,\Omega) & H(\Div,\Omega) & L^2(\Omega)
    \\
           \mathcal{C}_0 &  \mathcal{C}_1 &  \mathcal{C}_2 &  \mathcal{C}_3
    \\
           \tilde{\mathcal{C}}_3 &  \tilde{\mathcal{C}}_2 &  \tilde{\mathcal{C}}_1 &  \tilde{\mathcal{C}}_0
    \\
          L^2(\Omega) & H(\Div,\Omega) &  H(\curl,\Omega) & H^1(\Omega)
    \\
    };
  \path[-stealth]
%
    (m-2-1.75) edge node [right] {$\cI_0$} (m-1-1.285)
(m-2-2.75) edge node [right] {$\cI_1$} (m-1-2.285)
(m-2-3.75) edge node [right] {$\cI_2$} (m-1-3.285)
(m-2-4.75) edge node [right] {$\cI_3$} (m-1-4.285)
(m-1-1.255) edge node [left] {$\cR_0$} (m-2-1.105)
(m-1-2.255) edge node [left] {$\cR_1$} (m-2-2.105)
(m-1-3.255) edge node [left] {$\cR_2$} (m-2-3.105)
(m-1-4.255) edge node [left] {$\cR_3$} (m-2-4.105)
    (m-1-1) edge node [above] {$\grad$} (m-1-2)
    (m-1-2) edge node [above] {$\curl$} (m-1-3)
    (m-1-3) edge node [above] {$\Div$}  (m-1-4)
    (m-2-1) edge node [above] {$\mathbb{G}$} (m-2-2)  
    (m-2-2) edge node [above] {$\mathbb{C}$} (m-2-3)
    (m-2-3) edge node [above] {$\mathbb{D}$}  (m-2-4)
    (m-4-1.105) edge node [left]  {$\tilde{\cR}_3$} (m-3-1.255)
    (m-4-2.105) edge node [left]  {$\tilde{\cR}_2$} (m-3-2.255)
    (m-4-3.105) edge node [left]  {$\tilde{\cR}_1$} (m-3-3.255)
    (m-4-4.105) edge node [left]  {$\tilde{\cR}_0$} (m-3-4.255)   
    (m-3-1.285) edge node [right] {$\tilde{\cI}_3$} (m-4-1.75)
(m-3-2.285) edge node [right] {$\tilde{\cI}_2$} (m-4-2.75)
(m-3-3.285) edge node [right] {$\tilde{\cI}_1$} (m-4-3.75)
(m-3-4.285) edge node [right] {$\tilde{\cI}_0$} (m-4-4.75)
    (m-4-4) edge node [above] {$-\grad$} (m-4-3)
    (m-4-3) edge node [above] {$\curl$} (m-4-2)
    (m-4-2) edge node [above] {$-\Div$}  (m-4-1)
    (m-3-4) edge node [above] {$-\tilde{\mathbb{G}}$} (m-3-3)  
    (m-3-3) edge node [above] {$\tilde{\mathbb{C}}$} (m-3-2)
    (m-3-2) edge node [above] {$-\tilde{\mathbb{D}}$}  (m-3-1)
    (m-3-1) edge node [left]  {$\hodgeDualThree$} (m-2-1)
    (m-3-2) edge node [left]  {$\hodgeDualTwo $} (m-2-2)
    (m-2-3) edge node [left]  {$\hodgeTwo $} (m-3-3)
    (m-2-4) edge node [left]  {$\hodgeThree$} (m-3-4)
    ;
\end{tikzpicture}
\end{equation}
In the diagram, we have denoted by $\matG$, $\matC$, $\matD$ the matrices representing the operators $\grad$, $\curl$, and $\Div$, respectively, on the level of the geometric degrees of freedom. Moreover, the discrete Hodge operators $\matH$ (and $\tilde{\matH}$ if it starts from the dual complex) are indexed corresponding to the starting space. We will choose to use in our formulation only 
$\matH_2$,  $\matH_3$, $\tilde{\matH}_2$,  $\tilde{\matH}_3$ and their inverses. Indeed, a choice needs to be made as \textit{e.g.} $\matH_0\neq \tilde{\matH}_3^{-1}$. 
Finally, $\cI_i$, $i=0,1,2,3$, denotes interpolation-histopolation operators that map the geometric degrees of freedom to a function in the continuous spaces. These are characterized by the right-inverse properties:
\begin{eqnarray}
\cR_{\ell}\left( \cI_{\ell} \cR_{\ell}(f)\right) = \cR_{\ell}(f) \text{   for all } f \in V_{\ell}, \quad \ell=0,1,2,3,  \label{eq:I_rinv}\\
\tilde{\cR}_{\ell}\left( \tilde{\cI}_{\ell} \tilde{\cR}_{\ell}(f)\right) = \tilde{\cR}_{\ell}(f) \text{   for all } f \in \tilde{V}_{\ell}, \quad \ell=0,1,2,3, \label{eq:tI_rinv}
\end{eqnarray}
where $V_{\ell}$ denotes the corresponding spaces $V_0 = H^1(\Omega)$, $V_1 = H(\curl,\Omega)$, $V_2 = H(\Div,\Omega)$ and $V_3 = L^2(\Omega)$. Operators with tilde denote the corresponding operators on the dual grids.

As remarked earlier and as will be seen in Section \ref{sec:conservation}, the discretized system will inherit many of the conservation laws when defining a commuting diagram. We call the diagram commuting if the following properties hold
\begin{itemize}
	\item The derivative operators form a commuting diagram with the reduction operators in the following sense:
	\begin{eqnarray}
	\cR_1(\grad \phi) &=& \matG \cR_0(\phi) \text{  for all } \phi \in H^1(\Omega);\label{eq:R1GR0} \\
	\cR_2(\curl \bF) &=& \matC \cR_1(\bF)  \text{  for all } \bF \in H(\curl, \Omega);\label{eq:R2CR1} \\
	\cR_3(\Div \bG) &=& \matD \cR_2(\bG)  \text{  for all } \bG \in H(\Div, \Omega); \label{eq:R3DR2}\\
	\tilde{\cR}_1(\grad \phi) &=& \tilde{\matG} \tilde{\cR}_0(\phi) \text{  for all } \phi \in H^1(\Omega); \label{eq:tR1GR0}\\
	\tilde{\cR}_2(\curl \bF) &=& \tilde{\matC} \tilde{\cR}_1(\bF)  \text{  for all } \bF \in H(\curl, \Omega); \label{eq:tR2CR1}\\
	\tilde{\cR}_3(\Div \bG) &=& \tilde{\matD} \tilde{\cR}_2(\bG)  \text{  for all } \bG \in H(\Div, \Omega). \label{eq:tR3DR2}
\end{eqnarray}
	\item Likewise the derivative operators form a commuting diagram with the inter\-po\-la\-tion-histopolation operators in the following sense:
		\begin{eqnarray}
		\grad(\cI_0 \arrphi) &=& \cI_1 \left( \matG \arrphi \right) \text{  for all } \arrphi \in \cC_0;\label{eq:I1G} \\		
		\curl(\cI_1 \arrF) &=& \cI_2 \left( \matC \arrF \right) \text{  for all } \arrF \in \cC_1;\label{eq:I2C} \\		
		\Div(\cI_2 \arrG) &=& \cI_3 \left( \matD \arrG \right) \text{  for all } \arrG \in \cC_2;\label{eq:I3D} \\		
		\grad(\tilde\cI_0 \arrphi) &=& \tilde\cI_1 \left( \tilde\matG \arrphi \right) \text{  for all } \arrphi \in \tilde\cC_0;\label{eq:I1Gd} \\		
		\curl(\tilde\cI_1 \arrF) &=& \tilde\cI_2 \left( \tilde\matC \arrF \right) \text{  for all } \arrF \in \tilde\cC_1;\label{eq:I2Cd} \\		
		\Div(\tilde\cI_2 \arrG) &=& \tilde\cI_3 \left( \tilde\matD \arrG \right) \text{  for all } \arrG \in \tilde\cC_2.\label{eq:I3Dd}
\end{eqnarray}
\end{itemize}
Finally, a natural choice for the discrete Hodge operators is
\begin{eqnarray}
\tilde{\matH}_3 = \cR_0 (\star \tilde{\cI}_3), \quad \tilde{\matH}_2 = \cR_1 (\star \tilde{\cI}_2), \quad \matH_2 = \tilde{\cR}_1 (\star \cI_2), \quad \matH_3 = \tilde{\cR}_0 (\star \cI_3),  	
\end{eqnarray}
where $\star$ denotes the classical continuous Hodge operator, but other choices are possible as we will see below.

\section{Tensor-product construction of mimetic finite differences}

After de\-fin\-ing this abstract framework, we will now exemplify reduction, interpolation and Hodge operators for which the diagram commutes on tensor product grids. 
As a construction principle in multiple dimensions we will rely on tensor products and construct the multidimensional operators from one dimensional operators. The dual grid setting proposed by Palha et al \cite{Palha:2014} is based on cells with Gauss--Lobatto points. The duality is directly introduced within each cell, the dual points being defined as Gauss points. This would be one option. 
We consider here translation invariant constructions based on Cartesian grids following the high order Finite Difference methods converging to pseudospectral methods as proposed in \cite{vincenti2016, fornberg1990}. A formulation of such mimetic finite differences for curvilinear grids has been provided by \cite{kreeft2011}. 
For an extension towards curvilinear grids along the lines of \cite{Perse2021} we refer to future work.

In the following, we provide a tensor product construction of reduction, interpolation and Hodge operators on a Cartesian domain $[0,L_x] \times [0,L_y] \times [0,L_z]$ with $M_{\alpha}$ nodes along each dimension $\alpha \in \{1,2,3\}$
\begin{equation} \label{gx}
\gx_\bsm = (x_{m_1}, y_{m_2}, z_{m_3})
  \qquad \text{ with } ~
  \bsm \in \range{1}{\bM} := \prod_{\alpha = 1}^3 \range{1}{M_\alpha}.
\end{equation}
The double brackets $\range{\cdot}{\cdot}$ denote the integer values in the given interval. 
Together with this primal grid, we consider the dual grid with points
\begin{equation} \label{gxd}
\gx_\bsm = (x_{m_1+\half}, y_{m_2+\half}, z_{m_3+\half})
  \qquad \text{ with } ~
  \bsm \in \range{1}{\bM} := \prod_{\alpha = 1}^3 \range{1}{M_\alpha}.
\end{equation}
For the position of the grid points on the dual grid, it holds that
\begin{equation*}
	x_{m_1 + \half} = \frac{x_{m_1}+x_{m_1+1}}{2}
\end{equation*}
and analogously for $y$ and $z$.

Since the construction principle will be tensor products of one-dimensional operators, let us also introduce the one-dimensional de Rham complex
\begin{equation*}\label{CGL1d}
\begin{tikzpicture}[baseline=(current  bounding  box.center)]
  \matrix (m) [matrix of math nodes,row sep=3em,column sep=4em,minimum width=2em] {
           H^1(\Omega) & L^2(\Omega)
    \\
           \mathcal{C}_0 &  \mathcal{C}_1 
    \\
           \tilde{\mathcal{C}}_1 &  \tilde{\mathcal{C}}_0
    \\
          L^2(\Omega) & H^1(\Omega)
    \\
    };
  \path[-stealth]
    (m-1-1) edge [transform canvas={xshift=-0.7mm}] node [left]  {$ \clr_0$} (m-2-1)    
    (m-2-1) edge [transform canvas={xshift=0.7mm}] node [right]  {$ \cli_0$} (m-1-1)
    (m-1-2) edge [transform canvas={xshift=-0.7mm}] node [left]  {$\clr_1$} (m-2-2)
    (m-2-2) edge [transform canvas={xshift=0.7mm}] node [right]  {$\cli_1$} (m-1-2)
    (m-1-1) edge node [above] {$\frac{\mathrm{d}}{\mathrm{d} x}$} (m-1-2)
    (m-2-1) edge node [above] {$\matd$} (m-2-2)  
    (m-4-2) edge [transform canvas={xshift=-0.7mm}] node [left]  {$\tilde{\clr}_0$} (m-3-2)
    (m-4-1) edge [transform canvas={xshift=-0.7mm}] node [left]  {$\tilde{\clr}_1$} (m-3-1) 
    (m-3-2) edge [transform canvas={xshift=0.7mm}] node [right]  {$\tilde{\cli}_0$} (m-4-2)
    (m-3-1) edge [transform canvas={xshift=0.7mm}] node [right]  {$\tilde{\cli}_1$} (m-4-1)
    (m-4-2) edge node [above] {$-\frac{\mathrm{d}}{\mathrm{d} x}$} (m-4-1)
    (m-3-2) edge node [above] {$\tilde{\matd}$} (m-3-1)  
    (m-3-1) edge [transform canvas={xshift=-0.7mm}]  node [left]  {$ \tilde{\matrh}_{1}$} (m-2-1)
    (m-2-1) edge [transform canvas={xshift=0.7mm}]  node [right]  {$ \matrh_{0}$} (m-3-1)
    (m-3-2) edge  [transform canvas={xshift=-0.7mm}]  node [left]  {$\tilde{\matrh}_{0}$} (m-2-2)
    (m-2-2) edge  [transform canvas={xshift=0.7mm}]  node [right]  {$\matrh_{1}$} (m-3-2)
    ;
\end{tikzpicture}
\end{equation*}	

\subsection{Reduction and derivative operators}\label{sec:reduction_operators}

For the definition of the reduction operators, we use the geometric degrees of freedom (cf.~\cite[Sec. 61.]{CPKS_variational_2020}).
\begin{definition}\label{def:restriction}
For functions $\phi \in H^1(\Omega)$, $\bE \in H(\curl,\Omega)$, $\bB \in H(\Div,\Omega)$ and $\rho \in L^2(\Omega)$ on a cartesian domain $\Omega$, let us define the restriction operators on the tensor product grid as follows:
\begin{itemize}
	\item $\cR_0: H^1(\Omega) \mapsto \cC_0$ which associates to a scalar function its values at all the vertices of the primal grid:
	$ \cR_{0,(i,j,k)}(\phi) = \phi(x_i,y_j,z_k) =: \arrphi_{i,j,k}.$
	\item $\cR_1:H(\curl,\Omega) \mapsto \cC_1$ which associates to a vector valued function the circulations on all the edges.  As there are three edges associated to a vertex we define
	$$\cR_{1,(i,j,k)} (\bE) = (\cR_{1,(i,j,k)} ^x(E_x), \cR_{1,(i,j,k)}^y (E_y), \cR_{1,(i,j,k)}^z (E_z))^\top,$$ with
	$\cR_{1,(i,j,k)}^x (E_x) = \int_{x_i}^{x_{i+1}} E_x(x,y_j,z_k) \dd x =: \arrE_{i+\half,j,k} =: \arrE^x_{i,j,k}$ the edge integral of $E_x$ along the $x$ direction, and similarly for the edges in the $y$ and $z$ directions. 
	\item $\cR_2:H(\Div,\Omega) \mapsto \cC_2$ associates to a vector valued function the fluxes through all faces.  As there are three faces associated to a vertex we define 
   $$\cR_{2,(i,j,k)} (\bB) = (\cR_{2,(i,j,k)}^x (B_x), \cR_{1,(i,j,k)}^y (B_y), \cR_{2,(i,j,k)}^z (B_z))^\top,$$ with
	$\cR_{2,(i,j,k)}^x (\bB) = \int_{y_j}^{y_{j+1}} \int_{z_{k}}^{z_{k+1}} B_x(x_i,y,z) \dd y \dd z =: \arrB_{i,j+\half,k+\half}=: \arrB^x_{i,j,k}$, the flux through the face orthogonal to the $x$ direction and similarly for the faces orthogonal to the $y$ and $z$ directions.
	\item $\cR_3: L^2(\Omega) \mapsto \cC_3$ which associates to a scalar function its integrals on all the cells of the grid: 
	$\cR_{3,(i,j,k)}(\rho) = \int_{x_i}^{x_{i+1}}\int_{y_j}^{y_{j+1}}\int_{z_k}^{z_{k+1}} \rho(x,y,z) \dd x  \dd y \dd z = \arrrho_{i,j,k}.$
\end{itemize}
The restriction operators for the dual sequence are defined analogously using the vertices, edges, faces and cells on the dual mesh:
\begin{itemize}
	\item $\tilde{\cR}_0: H^1(\Omega) \mapsto \tilde{\cC}_0$ which associates to a scalar function its values at all the vertices of the dual grid:
	$ \tilde{\cR}_{0,(i,j,k)}(\phi) = \phi(x_{i+\half},y_{j+\half},z_{k+\half}) = \tilde{\arrphi}_{i+\half,j+\half,k+\half}.$
	\item $\tilde{\cR}_1:H(\curl,\Omega) \mapsto \tilde{\cC}_1$ which associates to a vector valued function the circulations on all the edges of the dual grid. For the edges along the $x$ direction, we define
	$\tilde{\cR}_{1,(i,j,k)}^x (\bH) = \int_{x_{i-\half}}^{x_{i+\half}} H_x(x,y_{j+\half },z_{k+\half}) \dd x =: \tilde{\arrH}_{i,j+\half,k+\half} =: \tilde{\arrH}^x_{i,j,k}$, and similarly for the edges in the $y$ and $z$ directions. 
	\item $\tilde{\cR}_2:H(\Div,\Omega) \mapsto \tilde{\cC}_2$ which associates to a vector valued function the integrals on all the fluxes through all the faces of the dual grid. For the faces orthogonal to the $x$ direction, we define
	$$\tilde{\cR}_{2,(i,j,k)}^x (\bD) = \int_{y_{j-\half}}^{y_{j+\half}} \int_{z_{k-\half}}^{z_{k+\half}} D_x(x_{i+\half},y,z) \dd z \dd y =: \tilde{\arrD}_{i+\half,j,k} =: \arrD^x_{i,j,k},$$ and similarly for the faces orthogonal to the $y$ and $z$ directions.
	\item $\tilde{\cR}_3: L^2(\Omega) \mapsto \tilde{\cC}_3$ which associates to a scalar function its integrals on all the cells of the dual grid: 
	$\tilde{\cR}_{3,(i,j,k)}(\rho) = \int_{x_{i-\half}}^{x_{i+\half}}\int_{y_{j-\half}}^{y_{j+\half}}\int_{z_{k-\half}}^{z_{k+\half}} \rho(x,y,z) \dd z  \dd y \dd z = \tilde{\arrrho}_{i,j,k}.$
\end{itemize}
\end{definition}

We observe that with these notations for the reduction operators the indices of primal $p$-forms and dual $3-p$ forms perfectly match, and we can define a duality product
\begin{align*}
	\arrphi\cdot\tilde{\arrrho} &= \sum_{i,j,k} \arrphi_{i,j,k} \tilde{\arrrho}_{i,j,k}, \\
	\arrE\cdot\tilde{\arrD} &= \sum_{i,j,k} \arrE_{i+\half,j,k}\tilde{\arrD}_{i+\half,j,k}
	+ \arrE_{i,j+\half,k}\tilde{\arrD}_{i,j+\half,k} + \arrE_{i,j,k+\half}\tilde{\arrD}_{i,j,k+\half}\\
	&= \arrE^x\cdot\tilde{\arrD}^x + \arrE^y\cdot\tilde{\arrD}^y +\arrE^z\cdot\tilde{\arrD}^z\\
	\arrB\cdot\tilde{\arrH} &= \sum_{i,j,k} \arrB_{i,j+\half,k+\half}\tilde{\arrH}_{i,j+\half,k+\half}
	+ \arrB_{i+\half,j,k+\half}\tilde{\arrH}_{i+\half,j,k+\half} + \arrB_{i+\half,j+\half,k}\tilde{\arrH}_{i+\half,j+\half,k}\\
	&= \arrB^x\cdot\tilde{\arrH}^x + \arrB^y\cdot\tilde{\arrH}^y +\arrB^z\cdot\tilde{\arrH}^z \\
	\arrrho\cdot\tilde{\arrphi} &= \sum_{i,j,k} \arrrho_{i+\half,j+\half,k+\half} \tilde{\arrphi}_{i+\half,j+\half,k+\half}. 
\end{align*}  

Let us also define the following one-dimensional discrete derivative operator
 	\begin{equation*}
		 \mathbb{d} = \begin{pmatrix}
		-1 & 1 & 0 & \ldots & 0 \\
		0 & -1 & 1 & 0  & \\
		\vdots & & \ddots & \ddots & \\
		0 & & & -1 & 1 \\
		1 & 0 & \ldots & 0 & -1
		\end{pmatrix},
	\end{equation*} 
and its adjoint $\tilde{\matd} = - \matd^\top$. Then, we define the discrete $\grad$, $\Div$ and $\curl$ operators on the primal grid as blocks of Kronecker products of the one dimensional derivative and identity matrices: 
 \begin{equation} \label{hD0}
\matG = \begin{pmatrix}
\matI_{M_3} \otimes \matI_{M_2} \otimes \matd_{M_1} \\
\matI_{M_3} \otimes \matd_{M_2} \otimes \matI_{M_1} \\
\matd_{M_3} \otimes \matI_{M_2} \otimes \matI_{M_1}
\end{pmatrix},
\end{equation}
\begin{equation}\label{hD12}
  \matC = \begin{pmatrix}
  \matO_{M} & - \matd_{M_3} \otimes \matI_{M_2} \otimes \matI_{M_1}  & \matI_{M_3} \otimes \matd_{M_2} \otimes \matI_{M_3}\\
  \matd_{M_3} \otimes \matI_{M_2} \otimes \matI_{M_1}  & \matO_{M} &  -\matI_{M_3} \otimes \matI_{M_2} \otimes \matd_{M_1}\\
  -\matI_{M_3} \otimes \matd_{M_2} \otimes \matI_{M_1}  & \matI_{M_3} \otimes \matI_{M_2} \otimes \matd_{M_1} & \matO_{M} \\
  \end{pmatrix}
  \,  \text{ and } \,
  \matD =  \matG^\top.
\end{equation}
Their adjoint operators are given as:
 \begin{equation}\label{eq:relation_dual_derivative_ops}
 	\matG^\top = - \tilde{\matD}, \quad \matC^\top = \tilde{\matC}, \quad \matD^\top = - \tilde{\matG}.
 \end{equation}
 
\begin{theorem}
Let us consider a tensor product grid on a Cartesian domain and let the restriction operators be defined as in Definition~\ref{def:restriction} and the derivative operators as in \eqref{hD0}-\eqref{eq:relation_dual_derivative_ops}. Then, relations \eqref{eq:R1GR0}-\eqref{eq:tR3DR2} are satisfied.
\end{theorem}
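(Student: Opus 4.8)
The plan is to reduce every relation in \eqref{eq:R1GR0}--\eqref{eq:tR3DR2} to the one-dimensional fundamental theorem of calculus and then propagate it through the Kronecker-product structure of the derivative matrices. First, in one dimension I would verify $\clr_1(\tfrac{d}{dx}\phi) = \matd\,\clr_0(\phi)$: the component of the left-hand side on the edge $[x_i,x_{i+1}]$ is $\int_{x_i}^{x_{i+1}}\phi'(x)\,dx = \phi(x_{i+1}) - \phi(x_i)$, which is exactly row $i$ of $\matd$ (the entries $-1$ at vertex $i$ and $+1$ at vertex $i+1$, with the wraparound entry in the last row encoding periodicity) applied to $\clr_0(\phi)$. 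The dual statement $\tilde\clr_1(\tfrac{d}{dx}\phi) = \tilde\matd\,\tilde\clr_0(\phi)$ with $\tilde\matd = -\matd^\top$ follows identically: the dual edge carrying index $i$ runs from $x_{i-1/2}$ to $x_{i+1/2}$, and $\phi(x_{i+1/2}) - \phi(x_{i-1/2})$ is precisely row $i$ of $-\matd^\top$ applied to the dual vertex values.

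Next, for the three-dimensional gradient relation \eqref{eq:R1GR0} I would note that the $x$-component of $\grad\phi$ is $\partial_x\phi$ and $\cR_{1,(i,j,k)}^x(\partial_x\phi) = \int_{x_i}^{x_{i+1}}\partial_x\phi(x,y_j,z_k)\,dx = \phi(x_{i+1},y_j,z_k) - \phi(x_i,y_j,z_k)$, which is exactly $(\matI_{M_3}\otimes\matI_{M_2}\otimes\matd_{M_1})\cR_0(\phi)$ since the differencing acts only on the first index while $y_j,z_k$ are held fixed and the identity blocks act trivially; the $y$- and $z$-components are the same computation with the factors permuted, reproducing the block layout of $\matG$ in \eqref{hD0}. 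The two remaining primal relations \eqref{eq:R2CR1} and \eqref{eq:R3DR2} work by the same device applied to Stokes' and the divergence theorem face-by-face and cell-by-cell: the $x$-flux of $\curl\bF$ through the face $\{x_i\}\times[y_j,y_{j+1}]\times[z_k,z_{k+1}]$ is $\int\!\int(\partial_y F_z - \partial_z F_y)(x_i,y,z)\,dy\,dz$, and applying the fundamental theorem of calculus once in $y$ and once in $z$ (using Fubini to evaluate the inner integral first) turns this into a signed combination of the four bounding edge integrals of $F_y$ and $F_z$ — exactly the corresponding row of $\matC$ from \eqref{hD12}, minus signs included — while $\cR_3(\Div\bG)$ over a cell equals the signed sum of its six bounding face fluxes, matching $\matD = \matG^\top$.

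Finally, the dual relations \eqref{eq:tR1GR0}--\eqref{eq:tR3DR2} are obtained verbatim by replacing every primal grid point $x_i,y_j,z_k$ with the staggered point $x_{i\pm1/2},y_{j\pm1/2},z_{k\pm1/2}$ of \eqref{gxd}, the one-dimensional operator $\matd$ with $\tilde\matd = -\matd^\top$, and then using \eqref{eq:relation_dual_derivative_ops} to recognize the resulting Kronecker combinations as $\tilde\matG,\tilde\matC,\tilde\matD$; since the dual grid is again a Cartesian tensor-product grid, no new argument is required. I expect the only genuine work to be the sign and index bookkeeping — matching the off-diagonal minus signs in the definition of $\matC$ to the $\partial_y F_z - \partial_z F_y$ pattern, keeping the half-integer index shifts of Definition~\ref{def:restriction} consistent with the Kronecker ordering, and correctly invoking the wraparound entry in the last row of $\matd$ for edges and cells that straddle the periodic boundary.
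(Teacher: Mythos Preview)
Your proposal is correct and takes essentially the same approach as the paper: reduce each relation to the one-dimensional fundamental theorem of calculus and then use the Kronecker-product block structure of $\matG,\matC,\matD$. The paper's own proof is in fact briefer than yours --- it writes out only the $x$-component of \eqref{eq:R1GR0} and its dual analogue and then dismisses all remaining relations with ``the other relations can be found analogously'' --- so your sketch of the curl and divergence cases via iterated applications of the fundamental theorem is more explicit than what the paper provides.
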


\begin{proof}
	For $\phi \in H^1(\Omega)$, we have
	\begin{align*}
	\cR^x_{1,(i,j,k)}(\grad \phi) &=\int_{x_i}^{x_{i+1}} \frac{\dd\phi}{\dd x} (x,y_j,z_k)\dd x = \phi(x_{i+1},y_j,z_k) - \phi(x_{i},y_j,z_k) 
	\\
	&= (\matd_{M_1} \arrphi)_{i,j,k}.
	\end{align*}
	On the dual grid, we get
\begin{eqnarray*}
		&&\tilde{\cR}^x_{1,(i,j,k)}(\grad \phi) =\int_{x_{i-\half}}^{x_{i+\half}} \frac{\dd\phi}{\dd x} (x,y_{j+\half},z_{k+\half})\dd x = \phi(x_{i+\half},y_{j+\half},z_{k+\half}) \\
		&&\quad - \phi(x_{i-\half},y_{j+\half},z_{k+\half}) 
		=(-\matd_{M_1}^\top \arrphi)_{{i+\half},{j+\half},{k+\half}} = (\tilde{\matd}_{M_1} \arrphi)_{{i+\half},{j+\half},{k+\half}}.
\end{eqnarray*}
	We only consider the first component of the gradient, the other relations can be found analogously.
\end{proof}

\subsection{Interpolation-histopolation operators}

In order to evaluate a discrete function everywhere from its degrees of freedom in $\cC_i$ or $\tilde{\cC}_i$ $1\leq i \leq 3$, we need to define appropriate interpolation and histopolation operators (see \cite{robidoux2008}). Again the 3D operators can be constructed as products of one dimensional building blocks on a Cartesian mesh. 

Let us therefore construct the interpolation operator $\cli_0$ and the histopolation operator $\cli_1$ on the primal grid such that the following three properties are satisfied:
\begin{eqnarray}
\clr_0 \cli_0 \clr_0 (\phi) &=&\cli_0 \clr_0 (\phi) (x_i) = \clr_{0,i}(\phi)\text{ for }\phi \in V_0,\label{eq:prop_i0}\\ 
\clr_1 \cli_1 \clr_1(\psi) &=& \int_{x_i}^{x_{i+1}} (\cli_1 \clr_1(\psi))(x) \dd x = \clr_{1,i}(\psi)\text{ for }\psi \in V_1,\label{eq:prop_i1}\\
\frac{\dd}{\dd x} \left( \cli_0 \arrf \right) (x) &=& \left( \cli_1 \matd \arrf\right) (x)\text{ for }\arrf \in \cC_0.\label{eq:prop_i0i1}
\end{eqnarray}
These properties are the analogous properties to \eqref{eq:I_rinv} as well a \eqref{eq:I1G}-\eqref{eq:I3D} in three dimensions. Note that the restriction operator $\clr_0$ is again defined as point evaluation and $\clr_1$ as line integral.

A natural choice is to represent the functions interpolated to $V_0$ cell-wise by Lagrange polynomials $\ell_{i,\alpha}$ and the functions interpolated to $V_1$ by histopolation polynomials $\hbar_{i,\alpha}$ satisfying $\int_{x_{i+\gamma}}^{x_{i+\gamma+1}}\hbar_{i,\alpha}(x) \dd x = \delta_{\alpha,\gamma}$. Then, the first two properties are satisfied by definition. Indeed, a relation between these two types of functions can be established such that the third property is also satisfied. We summarize the definition in the following Theorem.

\begin{theorem}\label{thm:interpoaltion}
	Let $p \in \NN$ and define the $2p$ Lagrange polynomials $\ell_{i,\alpha}(x)$ satisfying $\ell_{i,\alpha}(x_j) = \delta_{i+\alpha,j}$, $\alpha= -p \ldots, p+1$. Moreover, define the $2p-1$ polynomials $\hbar_{i,\alpha}$, $\alpha =-p, \ldots, p$, by $\hbar_{i,\alpha} (x) = \sum_{\beta = \alpha+1}^{p+1} \ell'_{i,\beta}(x)$.  Then the interpolation operators defined on $[x_i,x_{i+1}]$ by
	\begin{eqnarray}
	\left( \cli_0 \arrf\right) (x) = \sum_{\alpha= -p}^{p+1} \arrf_{i+\alpha} \ell_{i,\alpha}(x), \text{ for } \arrf \in \cC_0, \label{def:i0}\\
	 \left( \cli_1 \arrg\right) (x) = \sum_{\alpha= -p}^p \arrg_{i+\alpha} \hbar_{i,\alpha}(x)  \text{ for } \arrg \in \cC_1, \label{def:i1}
	\end{eqnarray}
	satisfy the properties \eqref{eq:prop_i0}-\eqref{eq:prop_i0i1}.
\end{theorem}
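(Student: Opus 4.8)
The plan is to verify the three properties in turn, exploiting the defining interpolation/cardinality conditions on the $\ell_{i,\alpha}$ and the way the $\hbar_{i,\alpha}$ are built from derivatives of the $\ell_{i,\beta}$.

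First I would dispose of \eqref{eq:prop_i0}, which is immediate: by construction $\ell_{i,\alpha}(x_j)=\delta_{i+\alpha,j}$, so evaluating $\bigl(\cli_0\arrf\bigr)(x)=\sum_{\alpha=-p}^{p+1}\arrf_{i+\alpha}\ell_{i,\alpha}(x)$ at $x=x_i$ (the left endpoint of the reference cell $[x_i,x_{i+1}]$, where $\alpha=0$) picks out $\arrf_i=\clr_{0,i}(\arrf)$; the same holds at $x_{i+1}$ with $\alpha=1$. Then I would establish the histopolation/cardinality property of the $\hbar_{i,\alpha}$, namely $\int_{x_{i+\gamma}}^{x_{i+\gamma+1}}\hbar_{i,\alpha}(x)\,\dd x=\delta_{\alpha,\gamma}$: from $\hbar_{i,\alpha}=\sum_{\beta=\alpha+1}^{p+1}\ell_{i,\beta}'$ and the fundamental theorem of calculus, $\int_{x_{i+\gamma}}^{x_{i+\gamma+1}}\hbar_{i,\alpha}=\sum_{\beta=\alpha+1}^{p+1}\bigl(\ell_{i,\beta}(x_{i+\gamma+1})-\ell_{i,\beta}(x_{i+\gamma})\bigr)=\sum_{\beta=\alpha+1}^{p+1}(\delta_{\beta,\gamma+1}-\delta_{\beta,\gamma})$, a telescoping-type sum that evaluates to $1$ when $\gamma=\alpha$ and $0$ otherwise (being careful that the index $\beta$ only ranges over $\alpha+1,\dots,p+1$). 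This yields \eqref{eq:prop_i1}: $\int_{x_i}^{x_{i+1}}\bigl(\cli_1\clr_1(\psi)\bigr)(x)\,\dd x=\sum_{\alpha=-p}^{p}\clr_{1,i+\alpha}(\psi)\,\delta_{\alpha,0}=\clr_{1,i}(\psi)$.

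The third property \eqref{eq:prop_i0i1} is the heart of the matter and the place I expect the real work. Differentiating the definition of $\cli_0$ gives $\frac{\dd}{\dd x}\bigl(\cli_0\arrf\bigr)(x)=\sum_{\alpha=-p}^{p+1}\arrf_{i+\alpha}\ell_{i,\alpha}'(x)$. On the other side, $\matd\arrf$ has components $(\matd\arrf)_{i+\alpha}=\arrf_{i+\alpha+1}-\arrf_{i+\alpha}$, so $\bigl(\cli_1\matd\arrf\bigr)(x)=\sum_{\alpha=-p}^{p}(\arrf_{i+\alpha+1}-\arrf_{i+\alpha})\hbar_{i,\alpha}(x)$. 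Substituting $\hbar_{i,\alpha}=\sum_{\beta=\alpha+1}^{p+1}\ell_{i,\beta}'$ and exchanging the order of summation, I would collect the coefficient of each fixed $\ell_{i,\beta}'(x)$: it is $\sum_{\alpha=-p}^{\beta-1}(\arrf_{i+\alpha+1}-\arrf_{i+\alpha})$, another telescoping sum equal to $\arrf_{i+\beta}-\arrf_{i-p}$. Hence $\bigl(\cli_1\matd\arrf\bigr)(x)=\sum_{\beta=-p+1}^{p+1}\arrf_{i+\beta}\ell_{i,\beta}'(x)-\arrf_{i-p}\sum_{\beta=-p+1}^{p+1}\ell_{i,\beta}'(x)$. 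To conclude I need $\sum_{\beta=-p}^{p+1}\ell_{i,\beta}'(x)\equiv 0$ — equivalently $\sum_{\beta}\ell_{i,\beta}\equiv 1$, which is exactly the partition-of-unity property of a full Lagrange basis on the $2p$ nodes $x_{i-p},\dots,x_{i+p+1}$ (the unique interpolant of the constant function $1$ is $1$). Using this to replace $-\sum_{\beta=-p+1}^{p+1}\ell_{i,\beta}'$ by $\ell_{i,-p}'$, the expression becomes $\sum_{\beta=-p}^{p+1}\arrf_{i+\beta}\ell_{i,\beta}'(x)$, which matches $\frac{\dd}{\dd x}\bigl(\cli_0\arrf\bigr)(x)$ and proves \eqref{eq:prop_i0i1}.

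The main obstacle is purely bookkeeping: getting the summation ranges and the off-by-one shifts in the two telescoping arguments exactly right, and making sure the partition-of-unity identity is applied on the correct node set. There is no analytic difficulty — once the index gymnastics are set up, everything collapses via the fundamental theorem of calculus and $\sum_\beta\ell_{i,\beta}\equiv 1$. I would present the computation cell-by-cell on $[x_i,x_{i+1}]$, note that the pieces agree across cell interfaces because both sides are continuous there (the derivative of the global $C^0$ interpolant is the relevant histopolant by construction), and remark that the three-dimensional statements \eqref{eq:I_rinv} and \eqref{eq:I1G}–\eqref{eq:I3D} then follow by the tensor-product structure of the operators.
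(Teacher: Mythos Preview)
Your proposal is correct and follows essentially the same route as the paper: dispose of \eqref{eq:prop_i0} by the Lagrange nodal property, verify the histopolation cardinality $\int_{x_{i+\gamma}}^{x_{i+\gamma+1}}\hbar_{i,\alpha}=\delta_{\alpha,\gamma}$ via the fundamental theorem of calculus and a telescoping sum, and establish \eqref{eq:prop_i0i1} by substituting $\hbar_{i,\alpha}=\sum_{\beta=\alpha+1}^{p+1}\ell_{i,\beta}'$, telescoping, and invoking the partition-of-unity identity $\sum_{\beta=-p}^{p+1}\ell_{i,\beta}'\equiv 0$. The only cosmetic difference is that in the last step the paper collects terms by the coefficient $\arrf_{i+\alpha}$ whereas you collect by the basis function $\ell_{i,\beta}'$; both bookkeepings are equivalent.
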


\begin{proof}

Equation \eqref{eq:prop_i0} is satisfied by construction of the Lagrange polynomials. Next, we apply the $\clr_{1,\gamma}$ operator to $\hbar_{i,\alpha}$ for $\alpha,\gamma = -p,\ldots, p$:
\begin{eqnarray*}
	\clr_{1,i+\gamma}\left( \sum_{\beta = \alpha+1}^{p+1} \ell'_{i,\beta}(x)\right) &=& \sum_{\beta = \alpha+1}^{p+1} \left( \ell_{i,\beta}(x_{i+\gamma+1}) - \ell_{i,\beta} (x_{i+\gamma}) \right) = \sum_{\beta = \alpha+1}^{p+1}  \left( \delta_{\beta-1,\gamma} - \delta_{\beta,\gamma} \right) \\&=& \delta_{\alpha,\gamma},
\end{eqnarray*}
where we have used the nodal interpolation property of the Lagrange polynomials and resolved the telescoping sum in the last step and used the fact, that $\gamma \neq p+1$.\\
Finally, we show the last equality \eqref{eq:prop_i0i1}. It holds that
\begin{eqnarray*}
	\cli_1(\matd \arrf)(x) &=& \sum_{\alpha=-p}^p \left( \arrf_{i+\alpha+1} - \arrf_{i+\alpha} \right) \hbar_{i,\alpha}(x) = \sum_{\alpha=-p}^p \left( \arrf_{i+\alpha+1} - \arrf_{i+\alpha} \right)  \sum_{\beta = \alpha+1}^{p+1} \ell'_{i,\beta}(x) \\
	&=& - \arrf_{i-p} \sum_{\beta=-p+1}^{p+1} \ell'_{i,\beta}(x) + \sum_{\alpha= -p+1}^p \arrf_{i+\alpha} \ell'_{i,\alpha}(x) + \arrf_{i+p+1} \ell'_{i,p+1}(x) \\
	&=& \sum_{\alpha= -p}^{p+1} \arrf_{i+\alpha} \ell'_{i,\alpha}(x) = \frac{\dd}{\dd x} \cli_0 \arrf(x)
\end{eqnarray*}
where we have contracted the telescoping sum for the third equality and used that $\sum_{\beta=-p}^{p+1} \ell_{i,\beta}' = 0$ (and hence $-\sum_{\beta=-p+1}^{p+1} \ell_{i,\beta}' = \ell_{i,-p}'$) due to the partition of unity property of the Lagrange polynomials.
\end{proof}
Analogously, the operators $\tilde{\cli}_0$, $\tilde{\cli}_1$ on the dual grid can be defined. Based on these one dimensional building blocks, we can define the three-dimensional interpolation-histopolation operators: They are constructed by combining the Lagrange interpolation functions along directions where the restriction operation is based on point evaluation and the histopolation functions along directions where the restriction is based on integration. Based on Theorem \ref{thm:interpoaltion}, properties \eqref{eq:I1G}-\eqref{eq:I3D} as well as \eqref{eq:I_rinv}-\eqref{eq:tI_rinv} can be verified for these 3d interpolation-histopolation operators.

\subsection{Discrete Hodge operators}\label{sec:hodge_operators}

Based on the definition of the interpolation and restriction operators, we can now compute the discrete Hodge operators based on the interpolation-histopolation operators. 
On an equidistant tensor product grid, the lines of each Hodge operator are equivalent, and they can be built from a Kronecker product of 1d Hodge operators along each line. 

Let us therefore again focus on the 1d de Rham complex where we have the following Hodge operators 
\begin{equation*}
\tilde{\matrh}_1 = \clr_0 \star \tilde{\cli}_1, \quad {\matrh}_0 = \tilde{\clr}_1 \star \cli_0, \quad {\matrh}_1 = \tilde{\clr}_0 \star \cli_1, \quad \tilde{\matrh}_0 = \clr_1 \star \tilde{\cli}_0.
\end{equation*}
These Hodge operators are thus entirely defined through the definition of the restriction and interpolation-histopolation methods.

Note that the operator $\tilde{\matrh}_1$ transfers from an integral over the dual cells to the vertex value on the primal cell and ${\matrh}_1$ transfers from the integral over the primal cell to the vertex value on the dual grid. On the other hand, the operators ${\matrh}_0$ and $\tilde{\matrh}_0$ transfer from values to cell integrals between dual and primal grids.

Let us first consider the direction from point value to cell integrals and consider the point values $\arrf \in \cC_0$ on the primal grid. We consider the integral over the cell $[x_{i-\half},x_{i+\half}]$ of the dual grid
\begin{equation*}
\tilde{\clr}_{1,i}(\cli_0(\arrf)) = \int_{x_{i-\half}}^{x_{i+\half}} \cli_0(\arrf) \dd x.
\end{equation*}
In order to compute this, we need the representation of the interpolation operator on the cells $[x_{i-1},x_{i}]$ and $[x_{i},x_{i+1}]$ which are given according to the definition \eqref{def:i0} as
	\begin{eqnarray*}
	\left( \cli_0 \arrf\right) (x) = \begin{cases} \sum_{\alpha= -p-1}^{p} \arrf_{i+\alpha} \ell_{i-1,\alpha}(x) & x \in [x_{i-1},x_{i}] \\	
	 \sum_{\alpha= -p}^{p+1} \arrf_{i+\alpha} \ell_{i,\alpha}(x) & x \in [x_{i},x_{i+1}]
	\end{cases}
	\end{eqnarray*}
Then by integration over the two separate pieces, we can define the components of the Hodge operator
\begin{equation*}\begin{aligned}
	\tilde{\clr}_{1,i}\cli_0(\arrf) &=& \sum_{\alpha=-p}^p \arrf_{i+\alpha} \underbrace{\left( \int_{x_{i-\half}}^{x_{i}} \ell_{i-1,\alpha}(x) \dd x + \int_{x_i}^{x_{i+\half}} \ell_{i,\alpha}(x) \dd x \right)}_{=(\matrh_{0})_{i,i+\alpha}} \\
	 &+& \arrf_{i-p-1} \underbrace{\int_{x_{i-\half}}^{x_i} \ell_{i-1,\alpha}(x) \dd x}_{= (\matrh_{0})_{i,i-p-1}} +\arrf_{i-p-1} \underbrace{ \int_{x_{i}}^{x_{i+\half}} \ell_{i,\alpha}(x) \dd x}_{= (\matrh_0)_{i,i+p+1}}.
\end{aligned}\end{equation*}
Hence, the Hodge operator becomes a matrix with $2p+3$ diagonal elements.  Analogously, we can compute the entries of the Hodge operator $\tilde{\matrh}_0$ interchanging the roles of primal and dual points.

We note that this operation is translation invariant on an equidistant grid such that all lines are equal and $\matrh_{0/1} = \tilde{\matrh}_{0/1}$. The Hodge operator of orders 2,4,6 for the case of equidistant grids are given in the first part of Table \ref{sec:appendix_hodge}. As an example, we here compute the matrix for the case of $p=0$ in the definition of $\cli_0$. This yields the following representation
\begin{eqnarray*}
			\cli_0(\arrphi)(x) = \begin{cases} \arrphi_{i-1} \ell_{i-1,0}(x) + \arrphi_i \ell_{i-1,1} (x) & x_{i-\half}<x<x_i \\
			\arrphi_{i} \ell_{i,0}(x) + \arrphi_{i+1} \ell_{i,1}(x)  & x_{i}<x<x_{i+\half}. \end{cases}
			\end{eqnarray*}
With this expression, we can compute the restriction as 
$$\tilde{\clr}_{1,i} \cli_0(\arrphi) = \frac{h}{8} \left( \arrphi_{i-1} + 6 \arrphi_i + \arrphi_{i+1}\right)$$

Now we turn to the direction from integral to the point evaluation. We assume that the integral is defined on the primal grid, and we evaluate at the points of the dual grid. On the cell $[x_i,x_{i+1}]$, the histopolation $\cli_1$ is then defined for $\arrpsi \in \cC_1$ by \eqref{def:i1} as
\begin{equation*}
\left( \cli_1 \arrpsi\right) (x) = \sum_{\alpha= -p}^p \arrpsi^1_{i+\alpha} \hbar_{i,\alpha}(x) .
\end{equation*}
For the Hodge, we now evaluate this expression at $x_{i+\half}$ to find
\begin{equation*}
\left( \cli_1 \arrpsi\right) (x_{i+\half}) = \sum_{\alpha= -p}^p \arrpsi^1_{i+\alpha} \underbrace{\hbar_{i,\alpha}(x_{i+\half})}_{(\matrh_{1})_{i,i-p}} .
\end{equation*}
We conclude that the Hodge operator becomes a $2p+1$ diagonal matrix. In particular, for $p=0$, we get an identity matrix scaled by $\frac{1}{h}$ for the equidistant case.
 The corresponding operators for order 2, 4 and 6 can be found in the lower part of Table~\ref{sec:appendix_hodge}.
 
 \begin{table}\caption{Stencils of the Hodge operators for equidistant grid. The stencil is symmetric around the diagonal and cyclic for periodic boundary conditions.}\label{sec:appendix_hodge}
 \centering
 \begin{tabular}{|l|c | c|}
 \hline
 Hodge &order & stencil \\
 \hline
\multirow{3}{*}{$\matrh_0= \tilde{\matrh}_0$} & 2 & $\frac{h}{8} \quad \frac{6h}{8} \quad \frac{1h}{8}$ \\[1mm]
& 4 & $-\frac{7h}{384} \quad \frac{44h}{384} \quad \frac{310h}{384}\quad \frac{44h}{384}\quad -\frac{7h}{384} $\\[1mm]
& 6 & $\frac{163h}{46080} \quad -\frac{1114h}{46080} \quad \frac{4909h}{46080}\quad \frac{38164h}{46080}\quad \frac{4909h}{46080}\quad -\frac{1114h}{46080} \quad \frac{163h}{46080}$\\[1mm]
\hline
\multirow{3}{*}{$\matrh_0^{\min}= \tilde{\matrh}_0^{\min}$}& 2 & h \\[1mm]
& 4 & $\frac{h}{24} \quad \frac{22h}{24} \quad \frac{h}{24}$\\[1mm]
& 6 &  $-\frac{17h}{5760} \quad \frac{154h}{2880} \quad \frac{863h}{960}\quad \frac{154h}{2880}\quad -\frac{-17h}{5760} $\\[1mm]
 \hline
\multirow{3}{*}{$\matrh_1= \tilde{\matrh}_1$}& 2 & $\frac{1}{h}$ \\[1mm]
& 4 & $-\frac{1}{24 h} \quad \frac{13}{12h} \quad - \frac{1}{24h}$\\[1mm]
& 6 & $\frac{3}{640h} \quad -\frac{29}{480h} \quad \frac{1067}{960h}\quad -\frac{29}{480h}\quad \frac{3}{640h} $\\[1mm]
\hline
 \end{tabular}
 \end{table}

Note that for a fixed $p$, the stencil transferring from point to cell integrals has a width of $2p+3$ while the width of the stencil transferring from cell integrals to points has a width of $2p+1$ and the order of the Hodge operators is $2(p+1)$. On the other hand, it is possible to construct $\matrh_0^{ \min}$, $\tilde{\matrh}_0^{\min}$ operators of order $2(p+1)$ with a stencil of minimal width $2p+1$ by constructing a single Lagrange interpolation polynomial of the points $i-p, \ldots i+p$ and integrating it over the interval $[x_{i-\half},x_{i+\half}]$. This yields
$$
(\matrh_0^{\min} \arrf)_i = \sum_{\alpha=-p}^{p} \arrf_{i + \alpha} \int_{x_{i-\half}}^{x_{i+\half}} \ell_{i,\alpha}(x) \, \dd  x,
$$
where $\ell_{i,\alpha}$ denotes the Lagrange polynomial $\alpha$ for nodes $x_{i-p},\ldots, x_{i+p}$. For $p=0$ this yields, for example, the classical second order Yee scheme. The corresponding stencils of order 2, 4 and 6 can be found in the middle part of Table \ref{sec:appendix_hodge}. We note that these Hodge operators do not stem from interpolation-histopolation operators but
 the conservation properties that we will consider in Section \ref{sec:conservation} can all be shown by only requiring the Hodge operator to be symmetric. Let us thus define the following assumption that we base the rest of the paper on:
\begin{assumption}\label{assumption}
We assume that \eqref{eq:R1GR0}-\eqref{eq:tR3DR2} hold true, that $\matH_2$ and $\tilde{\matH}_2$ are symmetric positive definite matrices and that relation \eqref{eq:relation_dual_derivative_ops} holds between the derivative operators on the primal and dual grid.
\end{assumption}

\section{Semi-discrete Action principle}\label{sec:discrete_lagrangian}

\subsection{Derivation of the equations of motion}

In order to derive the equations of motion, we again follow \cite{CPKS_variational_2020} and introduce our discretizations into the Lagrangian \eqref{cL} and then derive the equations of motion as the Euler--Lagrange equations of the semi-discrete Lagrangian.

A general semi-discrete formulation of Low's Lagrangian based on the commuting diagram \eqref{CGL} was formulated in \cite{CPKS_variational_2020} as
\begin{equation}\begin{aligned} \label{cLh}
  &\cL_h (\bX_p, \dot{\bX}_p, \bV_p, \bA_h, \dot{\bA}_h, \phi_h)
  \\
  & \quad = \sum_{p=1}^N w_p \left( \big(m_s \bV_p + q_s \bA^S(\bX_p)\big)\cdot \dot{\bX}_p - \Big(\frac{m_s}{2}\bV^2_p +q_s\phi^S(\bX_p)\Big)\right)  \\
  &\quad +\frac {1}{2} \int_{\Omega} \varepsilon(x) |\grad\phi_h(\bx) +\dot{\bA}_h(\bx)|^2 \dd \bx  - \frac {1}{2}\int_{\Omega}  \frac{1}{\mu(x)}|\curl \bA_h(\bx)|^2  \dd \bx,
\end{aligned}\end{equation}
where $\bA^S$ and $\phi^S$ are obtained from the discrete fields $\bA_h$ and $\phi_h$ by a smoothing operator. Using the duality between the grids and the adequate reduction operators, we define the smoothed fields from their degrees of freedom on the grid and a smoothing kernel $S$ that will be taken to be a cardinal spline:
\begin{equation*}
	\bA^S(\bX_p) = \begin{pmatrix}
		\arrA^x \cdot \tilde{\cR}_{2}^x(S(\bx-\bX_p)) \\
		\arrA^y \cdot \tilde{\cR}_{2}^y(S(\bx-\bX_p)) \\
		\arrA^z \cdot \tilde{\cR}_{2}^z(S(\bx-\bX_p))
	\end{pmatrix}, \quad 
	\phi^S(\bx) = \arrphi \cdot \tilde{\cR}_{3}(S(\bx-\bX_p)). 
\end{equation*}
For conciseness, we will write in the sequel $S_p(\bx) := S(\bx-\bX_p)$.

For the discretization of the integrals based on the mimetic finite difference description, we can make a second order approximation based on the mean value theorem. Let us consider for example the integral 
$\int \bE(\bx) \cdot \bD(\bx) \dd \bx$
and define the reductions $\arrE = \cR_1(\bE) \in \cC_1$ and $\tilde{\arrD} = \tilde{\cR}_2(\bD) \in \tilde{\cC}_2$.

By the mean value theorem we get
\begin{equation*}
	\int_{x_i}^{x_{i+1}} \int_{y_{j-\half}}^{y_{j+\half}} \int_{z_{k-\half}}^{z_{k+\half}} \bE_x(\bx) \cdot \bD_x(\bx) \dd \bx = \arrE_{i+\half,j,k} \cdot \tilde{\arrD}_{i+\half,j,k} + \cO(\Delta x^2)
\end{equation*}
With this approach the semi-discrete Lagrangian can be represented as
\begin{eqnarray}\label{eq:Lh}
	\mathcal{L}_h &=& \sum_{p=1}^{N_p} w_p
	  \left( m \bV_p \cdot \dot{\bX}_p - \frac{m}{2} \bV_p^2 +  q \arrA \cdot \tilde{\cR}_2  \left( \dot{\bX}_p S_p(\bx)\right) - q\arrphi \cdot \tilde{\cR}_3(S_p(\bx)) \right)  \\
	  &&- \half \tilde{\arrD} \cdot \tilde{\matH}_2 \tilde{\arrD} - \half \left( \mathbb{C}\arrA \right) \cdot  \matH_2 \mathbb{C}\arrA +\tilde{\arrD} \cdot \left( -\dot{\arrA} - \mathbb{G} \arrphi \right)
	  \end{eqnarray}
where $\arrA\in\cC_1$ and $\arrphi\in\cC_3$ represent the arrays of degrees for freedom of $\bA_h$ and $\phi_h$.	  
The underlying definition of the electric and magnetic field degrees of freedom are
	  \begin{equation}\label{eq:eb_def}
	\arrE = - \dot{\arrA} - \matG \arrphi, \quad \arrB = \matC \arrA.
\end{equation}
Note that we have introduced the Hodge operator in order to only use one set of discrete variables, either on the primal (for the magnetic field) or on the dual grid (for the electric field). In the equations of motion, we will work with the degrees of freedom of the fields $\tilde{\arrD}$ and $\arrB$. 
 Also, we have added the term $\tilde{\arrD} \cdot \left( -\dot{\arrA} - \mathbb{G} \arrphi \right)$, since we want to work with the degrees of freedom of the electric field on the dual grid.

\begin{remark}
	Although we have motivated the coupling between primal and dual grid by a second order approximation of the $L^2$ scalar product, the dot product between quantities on the primal and the dual grid in the Lagrangian \eqref{eq:Lh} now defines a discrete duality and a high order approximation can be obtained from this Lagrangian by constructing high order Hodge operators.
\end{remark}

Now we can derive the equations of motion from the Euler--Lagrange equations. 
Let us first consider the variation with respect to $\arrA$. We have
\begin{equation*}
\fracp{\cL_h}{\arrA} = \sum_{p=1}^{N_p} w_p  q  \tilde{\cR}_{2}  \left( \dot{\bX}_p S_p(\bx)\right) - \mathbb{C}^\top  \matH_2 \mathbb{C}\arrA, \quad \fracp{\cL_h}{\dot{\arrA}} = -\tilde{\arrD}.
\end{equation*}
Using $\arrB = \matC \arrA \in \cC_2$, this yields the discrete Amp\`ere's law
\begin{equation*}
\frac{\dd \tilde{\arrD}}{\dd t}  = \mathbb{C}^\top  \matH_2 \arrB - \sum_{p=1}^{N_p} w_p  q  \tilde{\cR}_2  \left( \dot{\bX}_p S_p(\bx)\right).
\end{equation*}

Next let us consider the variation with respect to the scalar potential $\arrphi$
\begin{equation*}
	\fracp{ \cL_h}{ \arrphi} = -\sum_{p=1}^{N_p} w_pq  \tilde{\cR}_{3}(S_p(\bx))   +\tilde{\matD} \tilde{\arrD}, \quad \fracp{\cL_h}{\dot{\arrphi}} = 0, 
\end{equation*}
as $\matG^\top= - \tilde{\matD}$. This yields the electric Gauss' law
\begin{equation}\label{eq:disEGauss}
\tilde{\matD} \tilde{\arrD} = \sum_{p=1}^{N_p} w_pq  \tilde{\cR}_{3}(S(\bx- \bX_p)) .
\end{equation}

Taking the variation with respect to $\tilde{\arrD}$ yields
\begin{equation}\label{eq:def_etilde}
	-\tilde{\matH}_2 \tilde{\arrD} - \dot{\arrA} - \matG \arrphi = 0,
\end{equation}
since $\cL_h$ is independent of $\dot{\tilde{\arrD}}$. 
This equation yields Faraday's law when the discrete curl operator is applied, since $\arrB = \matC \arrA$ and $\matC \mat G = 0$ due to the complex property:
\begin{equation*}
	\frac{\dd \arrB}{\dd t} = -\matC \tilde{\matH}_2 \tilde{\arrD}.
\end{equation*}  
Finally, the magnetic Gauss law can be deduced from the definition of the magnetic field in \eqref{eq:eb_def} and the complex property $\matD \matC = 0$:
\begin{equation*}
	\matD \arrB = \matD \matC \arrA = 0.
\end{equation*}

Next, we turn to the particle equations of motion. The variation with respect to $\bV_p$ yields
\begin{equation}\label{eq:varVp}
	\frac{\dd\bX_p}{\dd t}=\bV_p.
\end{equation}

Finally, we consider the  variation with respect to $\bX_p$. Let us denote the unit vectors by $\uvec_\alpha$, $\alpha \in \{x,y,z\}$. Then we get
\begin{eqnarray*}
	\fracp{ \cL_h}{X_{p,\alpha}} &=& -w_p  q \arrA \cdot \tilde{\cR}_2  \left( \dot{\bX}_p \partial_\alpha S_p(\bx)\right) + q\arrphi\cdot \tilde{\cR}_3(\partial_\alpha S(\bx - \bX_p)) ,\\ 
	\fracp{\cL_h}{\dot{X}_{p,\alpha}} &=&  w_p m V_{p,\alpha} +  w_p  q \arrA \cdot \tilde{\cR}_2  \left( \uvec_\alpha S_p(\bx)\right).
\end{eqnarray*}
Hence,
\begin{equation*}
	\frac{\dd}{\dd t}\fracp{\cL_h}{\dot{X}_{p,\alpha}} =
	w_p \left(
  m \frac{V_{p,\alpha}}{\dd t} + q \dot{\arrA} \cdot \tilde{\cR}_2  \left( \uvec_\alpha S_p(\bx)\right)
	 - q \arrA \cdot \tilde{\cR}_2  ( \uvec_\alpha \dot{\bX}_p\cdot\nabla S_p(\bx))
	 \right).
\end{equation*}
As $\dot{\bX}_p=\bV_p$, $\uvec_\alpha \bV_p\cdot \nabla S-\bV_p \partial_\alpha S =\curl (\uvec_\alpha\times\bV_p S)$, $\partial_\alpha S=\Div (\uvec_\alpha S)$ and using \eqref{eq:varVp} this yields the following Euler--Lagrange equation
\begin{equation*}\label{eq:deriv_Lx0}
m \frac{\dd V_{p,\alpha}}{\dd t} = -  q \dot{\arrA} \cdot \tilde{\cR}_2  \left( \uvec_\alpha S_p(\bx)\right) + q\arrphi\cdot  \tilde{\cR}_3(\Div (\uvec_\alpha S_p(\bx))) +  q \arrA \cdot \tilde{\cR}_2  (\curl (\uvec_\alpha\times\bV_p S_p(\bx))) .
\end{equation*}
This becomes, using properties \eqref{eq:tR2CR1} and \eqref{eq:tR3DR2}
\begin{equation}\label{eq:deriv_Lx}
	m \frac{\dd V_{p,\alpha}}{\dd t} = -  q \dot{\arrA} \cdot \tilde{\cR}_2  \left( \uvec_\alpha S_p(\bx)\right) + q\arrphi\cdot  \tilde{\matD}\tilde{\cR}_2(\uvec_\alpha S_p(\bx)) +  q \arrA \cdot \tilde{\matC}\tilde{\cR}_1   (\uvec_\alpha\times\bV_p S_p(\bx))  .
\end{equation}
Now have that $\tilde{\matD}^\top = -\matG$ and $\tilde{\matC}^\top = \matC$, and we can use the definition of the fields from the potentials \eqref{eq:eb_def}.  
Therefore, equation \eqref{eq:deriv_Lx} becomes
\begin{equation*}\label{eq:deriv_Lx2}
	\begin{aligned}
	\frac{\dd \bV_{p}}{\dd t} &=& \frac{q}{m} \left(  \bE^S(\bX_p) + \bV_p \times \bB^S(\bX_p) \right).
\end{aligned}
\end{equation*}
Using $\arrE = \tilde{\matH}_2 \tilde{\arrD}$ we have
\begin{equation*}
	\bE^S(\bX_p) := \sum_{\alpha}  \Big((\tilde{\matH}_2 \tilde{\arrD}) \cdot \tilde{\cR}_{2}(\uvec_\alpha S_p(\bx)) \Big) \uvec_{\alpha}, \;
	\bB^S(\bX_p) := \sum_{\alpha} \Big(\arrB \cdot\tilde{\cR}_1( \uvec_\alpha S_p(\bx))\Big) \uvec_{\alpha} ,
\end{equation*}
or equivalently writing explicitly the three components
\begin{equation*}
	\bE^S(\bX_p) = \begin{pmatrix}
		(\tilde{\matH}_2 \tilde{\arrD})^x \cdot \tilde{\cR}_{2}^x(S_p(\bx)) \\
		(\tilde{\matH}_2 \tilde{\arrD})^y \cdot \tilde{\cR}_{2}^y(S_p(\bx)) \\
		(\tilde{\matH}_2 \tilde{\arrD})^z \cdot \tilde{\cR}_{2}^z(S_p(\bx))
	\end{pmatrix},
\quad
	\bB^S(\bX_p) = \begin{pmatrix}
		\arrB^x \cdot \tilde{\cR}_{1}^x(S_p(\bx)) \\
		\arrB^y \cdot \tilde{\cR}_{1}^y(S_p(\bx)) \\
		\arrB^z \cdot \tilde{\cR}_{1}^z(S_p(\bx))
	\end{pmatrix}.
\end{equation*}

\subsection{Discrete noncanonical Hamiltonian system}

The discrete equations of motion can be collected as
\begin{eqnarray}
\frac{\dd \bX_p}{\dd t} &=& \bV_p,\label{eq:eqns_of_motion_part_1} \\
 \frac{\dd V_{p}^x}{\dd t} &=& \frac{q}{m} \left( 
	(\tilde{\matH}_2 \tilde{\arrD})^x \cdot \tilde{\cR}_{2}^x(S_p(\bx)) + V_{p}^y\tilde{\arrB}^z \cdot \tilde{\cR}_{1}^z(S_p(\bx)) -  V_{p}^z\tilde{\arrB}^y \cdot \tilde{\cR}_{1}^y(S_p(\bx))
 \right)\label{eq:eqns_of_motion_part_2x} \\
 \frac{\dd V_{p}^y}{\dd t} &=& \frac{q}{m} \left( 
	(\tilde{\matH}_2 \tilde{\arrD})^y \cdot \tilde{\cR}_{2}^y(S_p(\bx)) + V_{p}^z\tilde{\arrB}^x \cdot \tilde{\cR}_{1}^x(S_p(\bx)) -  V_{p}^x\tilde{\arrB}^z \cdot \tilde{\cR}_{1}^z(S_p(\bx))
 \right)\label{eq:eqns_of_motion_part_2y} \\
 \frac{\dd V_{p,z}}{\dd t} &=& \frac{q}{m} \left( 
	(\tilde{\matH}_2 \tilde{\arrD})^z \cdot \tilde{\cR}_{2}^z(S_p(\bx)) + V_{p}^x\tilde{\arrB}^y \cdot \tilde{\cR}_{1}^y(S_p(\bx)) -  V_{p}^y\tilde{\arrB}^x \cdot \tilde{\cR}_{1}^x(S_p(\bx))
 \right)\label{eq:eqns_of_motion_part_2z} \\
 	\frac{\dd\arrB }{\dd t} &=& -\matC \tilde{\matH}_2 \tilde{\arrD},\label{eq:eqns_of_motion_part_3} \\
 	\frac{\dd \tilde{\arrD}}{\dd t}  &=& \mathbb{C}^\top  \matH_2 \arrB - \sum_{p=1}^{N_p} w_p  q  \tilde{\cR}_2  \left( \bV_p S(x-\bX_p)\right) \label{eq:eqns_of_motion_part_4}.
\end{eqnarray}
In addition to these dynamical equations, we have the divergence constraints
\begin{eqnarray}\label{eq:div_constraints}
\tilde{\matD} \tilde{\arrD} = \sum_{p=1}^{N_p} w_pq  \tilde{\cR}_{3}(S_p(\bx)), \quad\matD \arrB = 0.
\end{eqnarray}
Note that the second equation follows from the definition $\arrB = \matC \arrA$ and the property $\matD \matC = 0$.

Let us introduce the matrix $\matS^2(\arrX) \in (\RR^{3\times 3})^{N_p \times N_2}$ with diagonal $3\times 3$ blocks
\begin{equation*} \label{matS}
  \matS^2(\arrX)_{p,i} = 
  \diag(\tilde{\cR}_{2,i}^x(S_p(\bx)),\tilde{\cR}_{2,i}^y(S_p(\bx)),\tilde{\cR}_{2,i}^z(S_p(\bx)) )
  \, \text{ for } \, 1 \le p \le N_p, \, 1 \le i \le N_2,
\end{equation*}
grouping the indices $(i,j,k)$ into one unique index $i$.
 Moreover, we introduce  $\matr(\bb) = \big((\uvec_\alpha \times \uvec_\beta ) \cdot \bb \big)_{1 \le \alpha, \beta \le 3}
\in \RR^{3\times 3}$ to be the rotation matrix
\begin{equation*}\label{matr}
\matr(\bb) = \begin{pmatrix}
   0 & b_3 & - b_2\\
   -b_3 & 0 & b_1 \\
   b_2 & -b_1 & 0
  \end{pmatrix}
  \quad \text{ such that }
  \quad \bv \times \bb  = \matr(\bb) \bv \quad \text{ for all } \bv, \bb \in \RR^3,
\end{equation*}
and we denote by
$\matR^1(\arrX, \arrB) \in (\RR^{3\times 3})^{N_p \times N_p}$
the block-diagonal rotation matrix with blocks
\begin{equation}\begin{aligned}\label{eq:matR1}
	\matR^1(\arrX, \arrB)_{p,p} &=  \matr\left(\sum_{i=1}^{N_1}\tilde{\cR}_{1,i}(\arrB_i S_p(\bx)\right)\\&=\matr((\tilde{\cR}_{1}^x(S_p(\bx))\cdot\arrB^x,\tilde{\cR}_{1}^y(S_p(\bx))\cdot\arrB^y,\tilde{\cR}_{1}^z(S_p(\bx))\cdot \arrB^z ).
\end{aligned}\end{equation}
Finally, we collect the weights $w_p$ times $q$, $m$, or $q/m$ on the diagonals of the matrices $\matW_q, \matW_m, \matW_{q/m} \in \RR^{3N_p \times 3N_p}$, respectively, and denote by $\matW_{1/m}=\matW_m^{-1}$.

With this notation, we can rewrite the system \eqref{eq:eqns_of_motion_part_1}-\eqref{eq:eqns_of_motion_part_4} as
\begin{eqnarray*}
\frac{\dd \arrX}{\dd t} &=& \arrV, \\
 \frac{\dd \arrV}{\dd t} &=& \matW_{\frac{q}{m}} \left( \matS^2(\arrX) \tilde{\matH}_2\tilde{\arrD} + \matR^1(\arrX, \arrB) \arrV  \right) \\
 	\frac{\dd\arrB}{\dd t}  &=& -\matC \tilde{\matH}_2 \tilde{\arrD}, \\
 	\frac{\dd \tilde{\arrD}}{\dd t}  &=& \mathbb{C}^\top  \matH_2 \arrB - \matW_{q} \matS^2(\arrX) \arrV.
\end{eqnarray*}

Let us discretize the Hamiltonian analogously to the Lagrangian as
\begin{equation}\label{eq:ham}
\cH(\arrU) 
= \tfrac 12 \arrV^\top \matW_m \arrV + \tfrac 12 \tilde{\arrD}^\top  \tilde{\matH}_2 \tilde{\arrD} + \tfrac 12 \arrB^\top \matH_2 \arrB,
\end{equation}
Collecting the dynamical variables into one vector $\arrU^\top = ( \arrX^\top, \, \arrV^\top, \,  \tilde{\arrD}^\top, \, \arrB^\top)$
we can represent the equations of motion as the following non-canonical Hamiltonian system
\begin{equation} \label{ham}
  \frac{\dd \arrU}{\dd t} = \matJ(\arrU)\nabla_\arrU \cH(\arrU) 
\end{equation}
with a structure matrix given by 
\begin{equation}\label{eq:poisson-mat}
 \matJ(\arrU) =
 \begin{pmatrix}
   0 &  \matW_{\frac 1m} & 0& 0 \\
  -\matW_{\frac 1m} & \matW_{\frac qm} \matR^1(\arrX, \arrB) \matW_{\frac 1m} & \matW_{\frac qm} \matS^2(\arrX) & 0\\
  0 & -\matS^2(\arrX)^\top\matW_{\frac qm} & 0 & \matC \\
  0& 0 &   \matC^\top &0\\
  \end{pmatrix},
\end{equation}
which is an antisymmetric matrix of dimension $3N_p+3N_p+N_1+N_2.$

\subsection{Semi-discrete conservation properties} \label{sec:conservation}

In this section, we show the conservation properties of the semi-discrete system under the Assumption \ref{assumption}.

\begin{theorem}
The matrix defined in \eqref{eq:poisson-mat} is a Poisson matrix, i.e. it is antisymmetric and satisfies the Jacobi identity for all $i,j,k \in \{1,\ldots, 6N_p + N_1 + N_2\}$
\begin{equation}\label{eq:jacobi_identity}
\sum_\ell \left( \frac{\partial \matJ_{i,j}(\arrU)}{\partial \arrU_\ell}\matJ_{\ell,k} + \frac{\partial \matJ_{jk}(\arrU)}{\partial \arrU_\ell}\matJ_{\ell,i} + \frac{\partial \matJ_{k,i}(\arrU)}{\partial \arrU_\ell} \matJ_{\ell,j}  \right) = 0.
\end{equation}
\end{theorem}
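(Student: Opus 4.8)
The proof has two parts: antisymmetry of \eqref{eq:poisson-mat} and the Jacobi identity \eqref{eq:jacobi_identity}. Antisymmetry I would just read off: in \eqref{eq:poisson-mat} every diagonal block vanishes except the $(\arrV,\arrV)$ block $\matW_{q/m}\matR^1(\arrX,\arrB)\matW_{1/m}$, which is antisymmetric because $\matW_{q/m}$ and $\matW_{1/m}$ are diagonal and each $3\times3$ block of $\matR^1$ is of the form $\matr(\bb)$, hence antisymmetric; the remaining off-diagonal blocks occur in negative-transpose pairs ($\matW_{1/m}$ against $-\matW_{1/m}$, $\matW_{q/m}\matS^2(\arrX)$ against $-\matS^2(\arrX)^\top\matW_{q/m}$, and $\matC$ against $\matC^\top$).

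For the Jacobi identity the plan is to exploit the sparsity of $\matJ$ to cut \eqref{eq:jacobi_identity} down to a few cases. First observe that $\matJ(\arrU)$ depends on $\arrU$ only through $\arrX$ (inside $\matS^2$ and $\matR^1$) and through $\arrB$ (inside $\matR^1$), so $\partial\matJ_{ij}/\partial\arrU_\ell$ can be nonzero only if $\ell$ is a position or a magnetic-field index and $(i,j)$ lies in the $(\arrV,\arrV)$, $(\arrV,\tilde\arrD)$ or $(\tilde\arrD,\arrV)$ block. Combining this with the sparsity of the companion factor $\matJ_{\ell k}$ — a position row of particle $p$ couples only to $\arrV_p$, a magnetic-field row only to $\tilde\arrD$, and the $(\arrV,\arrV)$ and $(\arrX,\arrV)$ blocks are block-diagonal in the particle index — one finds that the only index triples $(i,j,k)$ for which the left-hand side of \eqref{eq:jacobi_identity} is not identically zero are, up to cyclic permutation: \textbf{(a)} three velocity components of one and the same particle $p$; and \textbf{(b)} two velocity components of a particle $p$ together with one $\tilde\arrD$ component.

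In case \textbf{(a)}, writing $\matr(\bb)_{\alpha\beta}=\sum_\delta\epsilon_{\alpha\beta\delta}b_\delta$, so that the relevant entry is $\tfrac{q}{m^2}\sum_\delta\epsilon_{\alpha\beta\delta}\big(\bB^S(\bX_p)\big)_\delta$, and noting that the $\arrB$-derivatives drop (there is no $\arrB$–$\arrV$ coupling, so only the constant block $\matW_{1/m}$ survives as companion factor), the cyclic sum collapses by antisymmetry of the Levi-Civita symbol to a nonzero multiple of $\Div_{\bX_p}\bB^S(\bX_p)$. Since the derivative of $S_p(\bx)=S(\bx-\bX_p)$ with respect to $X_{p,\gamma}$ equals $-\partial_\gamma S_p$, this divergence equals $-\arrB\cdot\tilde\cR_1(\grad S_p)=-\arrB^\top\tilde\matG\,\tilde\cR_0(S_p)=(\matD\arrB)^\top\tilde\cR_0(S_p)$, using the commuting relation \eqref{eq:tR1GR0} and $\tilde\matG=-\matD^\top$ from \eqref{eq:relation_dual_derivative_ops}; it vanishes because $\arrB=\matC\arrA$ and $\matD\matC=0$.

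In case \textbf{(b)} the cyclic sum has three contributions: one from differentiating the $(\arrV,\arrV)$ block with respect to $\arrB$ and contracting with the $\matC^\top$ in the $(\arrB,\tilde\arrD)$ block, and two from differentiating the $(\arrV,\tilde\arrD)$ and $(\tilde\arrD,\arrV)$ blocks with respect to $\arrX_p$ and contracting with the constant block $\matW_{1/m}$. Using again the relation between the $\bX_p$- and $\bx$-derivatives of $S_p$, the identification $\tilde\matC=\matC^\top$ from \eqref{eq:relation_dual_derivative_ops}, the commuting relation \eqref{eq:tR2CR1} in the form $\tilde\matC\tilde\cR_1(\bF)=\tilde\cR_2(\curl\bF)$, and the vector identity $\uvec_\alpha\partial_\beta S_p-\uvec_\beta\partial_\alpha S_p=\curl\big((\uvec_\alpha\times\uvec_\beta)S_p\big)$ — precisely the ingredients already used to pass to \eqref{eq:deriv_Lx} — I would rewrite the first contribution as $\tilde\cR_2$ applied to $\uvec_\alpha\partial_\beta S_p$ and $\uvec_\beta\partial_\alpha S_p$, which are exactly the terms produced by the other two contributions, so that the total vanishes. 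This case is the main obstacle: the conceptual content is entirely contained in the commuting-diagram relations of Assumption \ref{assumption}, but the cancellation only becomes visible after carefully lining up the Levi-Civita structure of the $\matr(\cdot)$ blocks, the diagonal block pattern of $\matS^2(\arrX)$, the factor $\matC^\top$ together with the dual-grid index matching, and every sign, across the three cyclic terms.
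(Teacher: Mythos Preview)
Your proposal is correct and follows essentially the same route as the paper: reduce the Jacobi identity by sparsity of $\matJ$ and of $\partial\matJ/\partial\arrU$ to the two non-trivial index configurations (all three indices in $\arrV$, or two in $\arrV$ and one in $\tilde\arrD$), then in case \textbf{(a)} collapse the cyclic sum to $\matD\arrB\cdot\tilde\cR_0(S_p)$ via \eqref{eq:tR1GR0} and \eqref{eq:relation_dual_derivative_ops}, and in case \textbf{(b)} match the $\arrB$-derivative term against the two $\arrX$-derivative terms using \eqref{eq:tR2CR1}, $\tilde\matC=\matC^\top$, and the identity $\curl((\uvec_\alpha\times\uvec_\beta)S_p)=\uvec_\alpha\partial_\beta S_p-\uvec_\beta\partial_\alpha S_p$. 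The only cosmetic difference is that in case \textbf{(b)} you expand the curl into its two components to meet the $\matS^2$-terms, while the paper packs the $\matS^2$-terms into a curl to meet the $\matR^1$-term; the cancellation is identical.
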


\begin{proof}
We note that the blocks corresponding to $\bX$ and $\bV$ have $3 N_p$ lines or columns, the blocks corresponding to $\tilde{\arrD}$ have $N_1$  lines or columns, and the blocks corresponding to $\arrB$ have $N_2$ lines or columns.
	Also, we note that $\matJ$ only depends on $\arrX$ and $\arrB$. Hence, we only need to consider the lines corresponding to $\arrX$ and $\arrB$ in the sum on $\ell$, \textit{i.e.} $\ell \in \{1, \ldots, 3N_p\}$ and $\ell \in \{6N_p+N_1+1, \ldots, 6N_p+N_1+N_2\}$. 
	In order to identify the terms that do not trivially vanish, let us consider the block
	form of the matrix, denoting by $\matJ_{X,V}$ the block with lines corresponding to $\arrX$ and columns to $\arrV$. Not counting the antisymmetric counterparts only 
	$\matJ_{X,V}$, $\matJ_{V,V}$, $\matJ_{V,D}$, and $\matJ_{D,B}$ do not vanish. On the other hand for the derivatives, only the blocks $\fracp{\matJ_{V,V}}{\arrX}$,  $\fracp{\matJ_{V,D}}{\arrX}$
	and $\fracp{\matJ_{V,V}}{\arrB}$ do not vanish. So in the sums involved in  \eqref{eq:jacobi_identity}, only products corresponding to terms in 
	$$\fracp{\matJ_{V,V}}{\arrX}\matJ_{X,V}, \;\fracp{\matJ_{V,V}}{\arrB}\matJ_{D,B}, \;  
	\fracp{\matJ_{V,D}}{\arrX}\matJ_{D,B}$$ 
	do not vanish. So we have only two cases to consider:
	\begin{enumerate}
		\item All indices $i,j,k \in V=\{3N_p+1, \ldots, 6N_p\}$
	    \item Two indices, $i,j\in  V=\{3N_p+1, \ldots, 6N_p\}$ and one index $k\in D = \{6N_p+1,\ldots, 6N_p+N_1\}$.
	\end{enumerate}
	Let us first consider the case that $i,j,k \in V=\{3N_p+1, \ldots, 6N_p\}$. Then the sum of $\ell$ is also restricted to $\{3N_p+1, \ldots, 6N_p\}$. Let us define $\hat{i} = i - 3N_p$,  $\hat{j} = j - 3N_p$,  $\hat{k} = k - 3N_p$, so that we get the following non-trivial line of the Jacobi identity
	\begin{multline*}
		\sum_{\ell=1 }^{3N_p} \left( \frac{\partial \left(\matW_{\frac qm} \matR^1(\arrX, \arrB) \matW_{\frac 1m}\right)_{\hi,\hj}}{\partial \arrX_\ell} \left( \matW_{\frac{1}{m}}\right)_{\ell,\hk} + \frac{\partial \left(\matW_{\frac qm} \matR^1(\arrX, \arrB) \matW_{\frac 1m}\right)_{\hj,\hk}}{\partial \arrX_\ell} \left( \matW_{\frac{1}{m}}\right)_{\ell,\hi} 
		\right. \\ \left.
		+ \frac{\partial \left(\matW_{\frac qm} \matR^1(\arrX, \arrB) \matW_{\frac 1m}\right)_{\hi,\hj}}{\partial \arrX_\ell} \left( \matW_{\frac{1}{m}}\right)_{\ell,\hk} \right).
	\end{multline*}
	Let us note that the $\matW$ matrices are diagonal, hence only one value of $\ell$ contributes with a nonzero contribution in each term, and we can split the terms $\left(\matW_{\frac qm} \matR^1(\arrX, \arrB) \matW_{\frac 1m}\right)_{\cdot, \cdot}$. This yields
		\begin{equation*}\begin{aligned}
		\left(\matW_{\frac qm}\right)_{\hi,\hi} \frac{\partial \left( \matR^1(\arrX, \arrB) \right)_{\hi,\hj}}{\partial \arrX_{\hk}}\left(\matW_{\frac 1m}\right)_{\hj,\hj} \left( \matW_{\frac{1}{m}}\right)_{\hk,\hk} \\
		+\left(\matW_{\frac qm}\right)_{\hj,\hj} \frac{\partial \left( \matR^1(\arrX, \arrB) \right)_{\hj,\hk}}{\partial \arrX_{\hi}}\left(\matW_{\frac 1m}\right)_{\hk,\hk} \left( \matW_{\frac{1}{m}}\right)_{\hi,\hi}\\
		+\left(\matW_{\frac qm}\right)_{\hk,\hk} \frac{\partial \left( \matR^1(\arrX, \arrB) \right)_{\hk,\hi}}{\partial \arrX_{\hj}}\left(\matW_{\frac 1m}\right)_{\hi,\hi} \left( \matW_{\frac{1}{m}}\right)_{\hj,\hj}.
	\end{aligned}\end{equation*}
	Since $\matR^1(\arrX,\arrB)$ is a block-diagonal matrix with blocks belonging to the three indices of each particle and only depending on the $\arrX$ coordinates belonging to this particle, we only have a nonzero value if $\hi,\hj,\hk$ all belong to the same particle. In this case, we can disregard all the weight terms, since they are identical for all indices of one particle (in case of a zero weight, the term trivially gives zero). Let us consider the multi-indices $(p,\alpha)$, $(p,\beta)$, and $(p,\gamma)$ instead of $\hi,\hj,\hk$ to specify the particle number $p$ and the component by $\alpha, \beta,\gamma \in \{1,2,3\}$. We thus have to consider the following expression
	\begin{multline*}
		\frac{\partial \left( \matR^1(\arrX, \arrB) \right)_{(p,\alpha),(p,\beta)}}{\partial \arrX_{p,\gamma}} + \frac{\partial \left( \matR^1(\arrX, \arrB) \right)_{(p,\beta),(p,\gamma)}}{\partial \arrX_{p,\alpha}} + \frac{\partial \left( \matR^1(\arrX, \arrB) \right)_{(p,\gamma),(p,\alpha)}}{\partial \arrX_{p,\beta}} = \\
-\sum_{i=1}^{N_1}\left( \tilde{\cR}_{1,i}\left( \left(\uvec_\alpha \times \uvec_\beta \right) \cdot \arrB_i\partial_\gamma S_p(\bx) \right) + \tilde{\cR}_{1,i}\left( \left(\uvec_\beta \times \uvec_\gamma \right) \cdot \arrB_i\partial_\alpha S_p(\bx) \right) \right. \\ \left. 
+ \tilde{\cR}_{1,i}\left( \left(\uvec_\gamma \times \uvec_\alpha \right)\cdot \arrB_i \partial_\beta S_p(\bx) \right)  \right)	
	\end{multline*}
	where we have used the definition \eqref{eq:matR1} for the $\matR^1$ term. If two components are the same, this term vanishes by the definition of the cross product. Therefore, all terms are zero, if $\alpha= \beta = \gamma$. If two components are equal and one is different, one of the term is zero and the other two cancel out each other due to the antisymmetry of the cross product. Finally, if all terms are different, we have the three terms of the gradient, i.e.
		\begin{equation*}
\left( \arrB^x\cdot\tilde{\cR}_1^x\left(  \partial_x S_p(\bx) \right) + \arrB^y\cdot\tilde{\cR}_1^y\left(  \partial_y S_p(\bx) \right) + \arrB^z\cdot\tilde{\cR}_1^z\left(  \partial_z S_p(\bx) \right)  \right)		=
 \arrB \cdot\tilde{\cR}_1(\grad S_p(\bx))   .
	\end{equation*}
Now, we use the relation between the reduction operators and the gradient to find:
			\begin{equation*}
- \arrB\cdot\tilde{\cR}_1(\grad S_p(\bx))  = -\arrB\cdot\left(\tilde{\matG} \tilde{\cR}_0(S_p(\bx))\right)  
=  \matD\arrB\cdot\tilde{\cR}_0(S_p(\bx)) = 0,
	\end{equation*}
where we have use property \eqref{eq:relation_dual_derivative_ops} and the fact that $\matD=-\tilde{\matG}$.

	Now, we turn to the second case involving the blocks $(V,V)$, $(V,D)$ and $(D,V)$ \ie $i,j \in \{3N_p+1, \ldots, 6N_p\}$ and $k \in \{6N_p+1,\ldots, 6N_p+N_1\}$. Let us again define the shifted indices $\hat{i} = i - 3N_p$,  $\hat{j} = j - 3N_p$,  $\hat{k} = k - 6N_p$. Then, the non-trivially vanishing line of the Jacobi identity reads
	\begin{multline}\label{eq:proof_jacobi_1}
	\sum_{\ell=1 }^{N_2}  \frac{\partial \left(\matW_{\frac qm} \matR^1(\arrX, \arrB) \matW_{\frac 1m}\right)_{\hi,\hj}}{\partial \arrB_\ell} \matC_{\ell,\hk} \\ + \sum_{\ell=1 }^{3N_p} \left(\frac{\partial \left(\matW_{\frac qm} \matS^2(\arrX) \right)_{\hj,\hk}}{\partial \arrX_\ell} \left( -\matW_{\frac{1}{m}}\right)_{\ell,\hi} + \frac{\partial \left(- \matS^2(\arrX)^\top \matW_{\frac qm}\right)_{\hk,\hi}}{\partial \arrX_\ell} \left( -\matW_{\frac{1}{m}}\right)_{\ell,\hk} \right).
	\end{multline}
	As the matrix $\matW_{\frac 1m}$ is diagonal, we only have a non-zero term if $\ell=\hi$ in the second and $\ell=\hk$ in the third term.

	The block diagonal structure of $\matR^1$ also implies that the first term vanishes if $\hi$ and $\hj$ do not belong to the same particle, say $p$, and moreover $\hi \neq \hj$, since the diagonal of the block vanishes. If $\hi$ and $\hj$, do not belong to the same particle, also the second and third term vanish, since $\matS^2(\arrX)_{\hj,\hk}$ only depends on $\arrX_{\hi}$ if $\hi$ belongs to the same particle as $\hj$ (and the same with $\hj$ and $\hi$ interchanged). We also note that the terms are the same with opposite sign if $\hj=\hi$. Finally, let us consider the case where $\hi$ is expressed as $(p,\alpha)$ and $\hj$ as $(p,\beta)$ with $\alpha,\beta \in \{1,2,3\}$ and $\alpha \neq \beta$. Then, \eqref{eq:proof_jacobi_1} can be written as
	\begin{multline*}
		\sum_{\ell=1 }^{N_2}  \frac{\partial \left( \matR^1(\arrX, \arrB) \right)_{(p,\alpha),(p,\beta)}}{\partial \arrB_\ell} \matC_{\ell,\hk} - \frac{\partial \left( \matS^2(\arrX) \right)_{(p,\beta),\hk}}{\partial \arrX_{p,\alpha}} + 
		\frac{\partial \left(\matS^2(\arrX)\right)_{(p,\alpha),\hk}}{\partial \arrX_{(p,\beta)}}\\
		= \sum_{\ell=1 }^{N_2}  \frac{\partial \left( \matR^1(\arrX, \arrB) \right)_{(p,\alpha),(p,\beta)}}{\partial \arrB_\ell} \matC_{\ell,\hk} - \frac{\partial \left( \matS^2(\arrX) \right)_{(p,\beta),\hk}}{\partial \arrX_{p,\alpha}} + 
		\frac{\partial \left(\matS^2(\arrX)\right)_{(p,\alpha),\hk}}{\partial \arrX_{(p,\beta)}}.
	\end{multline*}
	Note that we have left out the weight terms, since the particle weight is the same for all components of the same particle, so that we have one constant multiplying the whole line.
	Using the definition \eqref{eq:matR1} for the first term, we get
	\begin{eqnarray*}
		&&\sum_{\ell=1 }^{N_2}  \frac{\partial \left( \matR^1(\arrX, \arrB) \right)_{(p,\alpha),(p,\beta)}}{\partial \arrB_\ell} \matC_{\ell,\hk} = \sum_{\ell=1}^{N_2} \tilde{\cR}_{1,\ell} \left( \uvec_{\alpha} \times \uvec_{\beta} S_p(\bx)\right)^\top \matC_{\ell,\hk}\\
		 && \quad= \sum_{\ell=1}^{N_2}  \tilde{\matC}_{\hk,\ell}\tilde{\cR}_{1,\ell} \left( \uvec_{\alpha} \times \uvec_{\beta} S_p(\bx)\right) = \tilde{\cR}_{2,\hk} \left( \curl \left( \uvec_\alpha \times \uvec_\beta S_p(\bx) \right) \right),
	\end{eqnarray*}
	where we have used the property \eqref{eq:relation_dual_derivative_ops} for the second and property \eqref{eq:tR2CR1} for the last equality. Next, we use that $\matS^2(\arrX)_{(p,\alpha),\hk} = \tilde{\cR}_{2,\hk} \left( \uvec_\alpha S_p(\bx) \right)$. Therefore, we can transform the second and third term to
	\begin{eqnarray*}
		 - \frac{\partial \left( \matS^2(\arrX) \right)_{(p,\beta),\hk}}{\partial \arrX_{p,\alpha}} + 
		\frac{\partial \left(\matS^2(\arrX)\right)_{(p,\alpha),\hk}}{\partial \arrX_{(p,\beta)}} &=& \tilde{\cR}_{2,\hk} \left( \uvec_\beta \partial_\alpha S_p(\bx) \right) - \tilde{\cR}_{2,\hk} \left( \uvec_\alpha \partial_\beta S_p(\bx) \right) \\
		&=& \tilde{\cR}_{2,\hk} \left( \curl \left(  \uvec_\beta \times \uvec_\alpha S_p(\bx) \right) \right).
	\end{eqnarray*}
	Since $\uvec_\beta \times \uvec_\alpha = - \uvec_\beta \times \uvec_\alpha$, the sum of the three terms is zero.
\end{proof}

We note that the Poisson matrix is independent on the Hodge operator so that this property holds true also when using Hodge operators of the same stencil width.

\begin{lemma}
	System \eqref{ham} with Poisson matrix \eqref{eq:poisson-mat} and Hamiltonian \eqref{eq:ham} conserves energy.
\end{lemma}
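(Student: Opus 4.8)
The plan is to invoke the general principle that a non-canonical Hamiltonian system $\dd\arrU/\dd t = \matJ(\arrU)\nabla_\arrU\cH(\arrU)$ with antisymmetric structure matrix conserves its Hamiltonian, and then to make the individual cancellations explicit for the concrete system at hand. The antisymmetry of $\matJ(\arrU)$ in \eqref{eq:poisson-mat} has already been established in the previous theorem, so the only remaining ingredient is the gradient of $\cH$.

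First I would differentiate the quadratic Hamiltonian \eqref{eq:ham} block by block. Since $\cH$ is independent of $\arrX$ we get $\nabla_\arrX\cH = 0$; since $\matW_m$, $\tilde{\matH}_2$ and $\matH_2$ are symmetric (the latter two by Assumption~\ref{assumption}, where only symmetry and not positive definiteness is needed for this step) we get $\nabla_\arrV\cH = \matW_m\arrV$, $\nabla_{\tilde{\arrD}}\cH = \tilde{\matH}_2\tilde{\arrD}$ and $\nabla_{\arrB}\cH = \matH_2\arrB$. Then, along any trajectory of \eqref{ham},
\[
\frac{\dd}{\dd t}\cH(\arrU) = \big(\nabla_\arrU\cH(\arrU)\big)^\top \frac{\dd\arrU}{\dd t} = \big(\nabla_\arrU\cH(\arrU)\big)^\top \matJ(\arrU)\,\nabla_\arrU\cH(\arrU) = 0,
\]
the last equality because $v^\top\matJ(\arrU)v = (v^\top\matJ(\arrU)v)^\top = -v^\top\matJ(\arrU)v$, hence $=0$, for every vector $v$ whenever $\matJ(\arrU)$ is antisymmetric.

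As a self-contained cross-check I would also perform the computation directly from the equations of motion \eqref{eq:eqns_of_motion_part_1}--\eqref{eq:eqns_of_motion_part_4}, writing $\dd\cH/\dd t = \arrV^\top\matW_m\dot{\arrV} + \tilde{\arrD}^\top\tilde{\matH}_2\dot{\tilde{\arrD}} + \arrB^\top\matH_2\dot{\arrB}$ and substituting; one then sees the three expected cancellations. The magnetic rotation contribution $\arrV^\top\matW_q\matR^1(\arrX,\arrB)\arrV$ vanishes because every diagonal block of $\matR^1$ is the skew matrix $\matr(\cdot)$ (the weight matrices being diagonal, $\matW_m\matW_{\frac qm}=\matW_q$). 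The discrete-current terms coupling $\arrV$ and $\tilde{\arrD}$ cancel because the $(V,D)$ and $(D,V)$ off-diagonal blocks of $\matJ$ in \eqref{eq:poisson-mat} are negatives of each other's transpose. The remaining terms coupling $\tilde{\arrD}$ and $\arrB$ cancel because $\matC$ enters $\dd\arrB/\dd t$ while $\matC^\top$ enters $\dd\tilde{\arrD}/\dd t$, which is the discrete counterpart of $\int_\Omega(\bE\cdot\curl\bH - \bH\cdot\curl\bE)\,\dd\bx = 0$ under periodic boundary conditions. I expect no genuine obstacle: the only delicate points are that $\tilde{\matH}_2$ and $\matH_2$ must be symmetric for $\nabla_{\tilde{\arrD}}\cH$ and $\nabla_{\arrB}\cH$ to take the stated form, and the routine bookkeeping of the diagonal weight matrices; the conclusion then follows from Assumption~\ref{assumption} together with the antisymmetry of $\matJ$ proven in the previous theorem.
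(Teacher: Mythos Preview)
Your proposal is correct and follows the same approach as the paper, which simply notes that energy conservation follows immediately from the form \eqref{ham} together with the antisymmetry of the Poisson matrix \eqref{eq:poisson-mat}. Your additional block-by-block verification and explicit cancellation check are not needed for the argument but are consistent with it.
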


\begin{proof}
	This follows immediately from the form \eqref{ham} and the fact that \eqref{eq:poisson-mat} is antisymmetric.
\end{proof}

\begin{lemma}
	The system \eqref{ham} conserves the divergence constraints \eqref{eq:div_constraints}, if they are satisfied initially.
\end{lemma}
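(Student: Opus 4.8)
The plan is to show that both conserved quantities associated with \eqref{eq:div_constraints}, namely $\matD\arrB$ and $\tilde{\matD}\tilde{\arrD} - \sum_{p=1}^{N_p} w_p q\,\tilde{\cR}_3(S_p(\bx))$, have identically vanishing time derivative along any solution of \eqref{ham}; since each vanishes at $t=0$ by hypothesis, it vanishes for all $t$. The magnetic part is immediate: differentiating $\matD\arrB$ and inserting \eqref{eq:eqns_of_motion_part_3} gives $\tfrac{\dd}{\dd t}(\matD\arrB) = -\matD\matC\,\tilde{\matH}_2\tilde{\arrD} = 0$ by the complex property $\matD\matC=0$ (indeed $\matD\arrB=0$ holds identically because $\arrB=\matC\arrA$, as already noted after \eqref{eq:div_constraints}).

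\textbf{The electric constraint.} For the first constraint, differentiate $\tilde{\matD}\tilde{\arrD}$ and insert the equation of motion \eqref{eq:eqns_of_motion_part_4}:
\[
\frac{\dd}{\dd t}\bigl(\tilde{\matD}\tilde{\arrD}\bigr) = \tilde{\matD}\matC^\top\matH_2\arrB - \sum_{p=1}^{N_p} w_p q\,\tilde{\matD}\,\tilde{\cR}_2\bigl(\bV_p S_p(\bx)\bigr).
\]
The first term vanishes: by \eqref{eq:relation_dual_derivative_ops} we have $\matC^\top=\tilde{\matC}$, and $\tilde{\matD}\tilde{\matC} = -(\matC\matG)^\top = 0$ by transposing the complex property $\matC\matG=0$. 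For the second term I would invoke the commuting-diagram relation \eqref{eq:tR3DR2} — applicable since the cardinal-spline kernel $S_p$ is smooth enough to lie in $H(\Div,\Omega)$ — to write $\tilde{\matD}\,\tilde{\cR}_2(\bV_p S_p(\bx)) = \tilde{\cR}_3\bigl(\Div(\bV_p S_p(\bx))\bigr)$. Because $\bV_p$ is spatially constant, $\Div(\bV_p S_p(\bx)) = \bV_p\cdot\nabla_\bx S_p(\bx)$, and since $S_p(\bx)=S(\bx-\bX_p(t))$ with $\dot{\bX}_p=\bV_p$ by \eqref{eq:varVp}, the chain rule yields $\bV_p\cdot\nabla_\bx S_p(\bx) = -\tfrac{\dd}{\dd t}S_p(\bx)$. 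Pulling the fixed linear functional $\tilde{\cR}_3$ through the time derivative, the second term equals $+\tfrac{\dd}{\dd t}\sum_{p} w_p q\,\tilde{\cR}_3(S_p(\bx))$, i.e. precisely the time derivative of the right-hand side of the first identity in \eqref{eq:div_constraints}. Hence $\tfrac{\dd}{\dd t}\bigl(\tilde{\matD}\tilde{\arrD} - \sum_p w_p q\,\tilde{\cR}_3(S_p(\bx))\bigr)=0$, which finishes the proof.

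\textbf{Main obstacle.} The only substantive step is the discrete continuity identity $\Div(\bV_p S_p(\bx)) = -\partial_t S_p(\bx)$, the exact counterpart of $\partial_t\rho + \Div\bJ = 0$; it is what couples the current source in the discrete Ampère law \eqref{eq:eqns_of_motion_part_4} to the charge source in the Gauss law so that the two sources vary in lockstep. Everything else is bookkeeping with the nilpotency relations $\matC\matG=0=\matD\matC$ (and their duals via \eqref{eq:relation_dual_derivative_ops}) and the commuting relations \eqref{eq:R3DR2}--\eqref{eq:tR3DR2} already established for the discrete de Rham complex. The one point to flag explicitly is the admissibility of \eqref{eq:tR3DR2} here, which follows from the assumed regularity of the smoothing kernel $S$.
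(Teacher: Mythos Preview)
Your proof is correct and follows essentially the same approach as the paper: both differentiate $\matD\arrB$ and $\tilde{\matD}\tilde{\arrD}$, kill the field terms via the complex properties (you derive $\tilde{\matD}\tilde{\matC}=0$ by transposing $\matC\matG=0$ through \eqref{eq:relation_dual_derivative_ops}, the paper simply asserts it), and then use the discrete continuity identity $\partial_t S_p = -\Div(\bV_p S_p)$ together with \eqref{eq:tR3DR2} to match the source terms. Your added remark on the regularity of $S$ needed to invoke \eqref{eq:tR3DR2} is a nice touch not made explicit in the paper.
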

\begin{proof}
Applying $\matD$ to \eqref{eq:eqns_of_motion_part_3} yields
\begin{equation*}
	\matD\frac{\dd\arrB}{\dd t}  = -\matD \matC \tilde{\matH}_2 \tilde{\arrD}.
\end{equation*}
Since $\matD \matC = 0$ and the time derivative and the matrix $\matD$ converge, we get
$\frac{\dd}{\dd t} \matD \arrB $.
Next, let us apply $\tilde{\matD}$ to \eqref{eq:eqns_of_motion_part_4}
\begin{equation}\label{lem_div_constraints:1}
\tilde{\matD}\frac{\dd \tilde{\arrD}}{\dd t}  = \tilde{\matD}\mathbb{C}^\top  \matH_2 \arrB -\tilde{\matD} \sum_{p=1}^{N_p} w_p  q  \tilde{\cR}_2  \left( \bV_p S(x-\bX_p)\right)
\end{equation}
First we note that $\matC^\top = \tilde{\matC}$ and hence it holds that $\tilde{\matD}\matC^\top = 0$ and the first term on the right-hand-side vanishes. Furthermore, taking the time derivative of $S(\bx-\bX_p)$, we get the discrete continuity equation
\begin{equation*}
	\frac{\dd}{\dd t} S(\bx-\bX_p) = - \bV_p\cdot\nabla S(\bx-\bX_p) = -\Div (\bV_p S(\bx-\bX_p)). 
\end{equation*}
Applying $\tilde{\cR}_3$ to this equation and using property \eqref{eq:tR3DR2}, we get
\begin{equation*}
-\tilde{\matD} \sum_{p=1}^{N_p} w_p  q  \tilde{\cR}_2  \left( \bV_p S(x-\bX_p)\right) = \frac{\dd }{\dd t} \sum_{p=1}^{N_p} w_pq  \tilde{\cR}_{3}(S(\bx - \bX_p)).
\end{equation*} 
Hence, equation \eqref{lem_div_constraints:1} corresponds to the total time derivative of the discrete Gauss law \eqref{eq:div_constraints}, which shows that this discrete Gauss law is satisfied at all times provided it is satisfied at the initial time.
\end{proof}
Note that the properties of the last two lemmas only depend on the properties $\matD \matC = 0$ and $\tilde{\matD}\tilde{\matC} = 0$, and, hence, not on how we have constructed the Hodge operator.

\section{Propagation in time}

Until now, we have considered a semi-discrete system, and we still need to discretize in time in order to be able to perform numerical simulations. The semi-discrete system has the same form as the semi-discrete system derived from a finite element exterior calculus discretization of the Vlasov--Maxwell system. For this reason, the very same time propagation schemes can be employed also in this case. In particular, one can use a Hamiltonian splitting to obtain an explicit scheme (cf.~\cite{he2015hamiltonian,kraus2016gempic}) or we can use the fact that the system is a skew-symmetric gradient system and derive energy-conserving (semi-)implicit discrete gradient schemes as in \cite{kormann2020energy}. Moreover, subcycling algorithms as discussed in \cite{hirvijoki2020, kormann2020energy} can also be derived. In our numerical experiments, we will consider the explicit time propagation scheme based on the Hamiltonian splitting as in \cite{kraus2016gempic}. This integrator is a Poisson integrator, i.e. the divergence constraints remain satisfied over time, but there is an oscillatory error in energy that is of the order $\cO(\Delta t^q)$, where $q$ is the order of the splitting scheme applied, i.e. $q=2$ for a Strang splitting, for instance.

\section{Summary of the full scheme}

\begin{enumerate}
\item Initialization:
\begin{enumerate}
\item Sample the particles from the initial distribution: $\arrX^{(0)}$, $\arrV^{(0)}$.
\item Accumulate the charge density $\arrrho^{(0)} \in {\cC}_3$ according to the right-hand side of \eqref{eq:disEGauss} and solve the Poisson equation $- \tilde{\matD} \tilde{\matH}_2^{-1} \matG \arrphi^{(0)} =\arrrho^{(0)}$ for $\arrphi^{(0)} \in \tilde{\cC}_0$ and initialize the self-consistent electric field as $\tilde{\arrD}^{(0)} = \tilde{\matH}_2 \matG \arrphi^{(0)}$.\\
Note that the Poisson equation can be obtained by combining \eqref{eq:disEGauss} and \eqref{eq:def_etilde} with the gauge condition $\Div \bA = 0$.
\item Add background magnetic field by projecting the given function.
\end{enumerate}
\item Time loop $n-1 \rightarrow n$ (Strang splitting)
\begin{enumerate}
\item Half time step of source-free Maxwell's equation
 $$	\arrB^*  = \arrB^{n-1} - \frac{\Delta t}{2}\matC \tilde{\matH}_2 \tilde{\arrD}^{n-1}, \quad
 	\tilde{\arrD}^*  = \mathbb{C}^\top  \matH_2 \arrB^{n-1}$$
\item Set $\tilde{\arrJ}^{(n)}=0$ and loop over particles and update particle $p$ as (denoting by $\varepsilon_{\alpha,\beta,\gamma}$ the Levi--Civita symbol)
\begin{enumerate}
\item $\arrV_p^* = \arrV_{p}^{(n-1)} + \frac{\Delta t}{2} \frac{w_p q_p}{m_p}  \matS^2(\arrX_p^{(n-1)}) \tilde{\matH}_2\tilde{\arrD}^*$
\item $\arrX_{p,x}^{\diamond}(\tau) = \arrX_{p,x}^{(n-1)}+ \tau \arrV_{p,x}^*$, $\arrX_{p,x}^* = \arrX_{p,x}^{\diamond}(\frac{\Delta t }{2})$, \\
$\arrV^{**}_{p,\alpha} = \arrV^*_{p,\alpha} + \varepsilon_{\alpha,x,\beta} \frac{w_p q_p}{m_p} \int_0^{\Delta t/2}\matR_{\beta}^1(\arrX_p^{\diamond}(\tau), \arrB^*) \arrV_{p,x}^* \dd \tau, \, \alpha,\beta \in \{y,z\},$\\
$\tilde{\arrJ}^{(n)}_{x} = \tilde{\arrJ}^{(n)}_{x}+  w_p q_p \int_0^{\Delta t/2} \matS_x^2(\arrX_p^{\diamond}(\tau)) \arrV^*_{p,x} \dd \tau$\\
\item $\arrX_{p,y}^{\diamond}(\tau) = \arrX_{p,y}^{(n-1)}+ \tau \arrV_{p,y}^{**}$, $\arrX_{p,y}^* = \arrX_{p,y}^{\diamond}(\frac{\Delta t }{2})$, \\
$\arrV^{***}_{p,\alpha} = \arrV^{**}_{p,\alpha} +\varepsilon_{\alpha,y,\beta} \frac{w_p q_p}{m_p} \int_0^{\Delta t/2}\matR_{\beta}^1(\arrX_p^{\diamond}(\tau), \arrB^*) \arrV_{p,y}^{**} \dd \tau,\, \alpha,\beta \in \{x,z\}$ \\
$\tilde{\arrJ}^{(n)}_{y} = \tilde{\arrJ}^{(n)}_{y}+ w_p q_p \int_0^{\Delta t/2} \matS_y^2(\arrX_p^{\diamond}(\tau)) \arrV^{**}_{p,y} \dd \tau$
\item $\arrX_{p,z}^{\diamond}(\tau) = \arrX_{p,z}^{(n-1)}+ \tau \arrV_{p,z}^{***}$, $\arrX_{p,z}^* = \arrX_{p,z}^{\diamond}(\Delta t )$, \\
$\arrV^{****}_{p,\alpha} = \arrV^{***}_{p, \alpha} +\varepsilon_{\alpha,z,\beta} \frac{w_p q_p}{m_p} \int_0^{\Delta t}\matR_{\beta}^1(\arrX_p^{\diamond}(\tau), \arrB^*) \arrV_{p,z}^{***} \dd \tau, \, \alpha,\beta\in\{x,y\}$ \\
$\tilde{\arrJ}^{(n)}_{z} = \tilde{\arrJ}^{(n)}_{z} +w_p q_p \int_0^{\Delta t} \matS_z^2(\arrX_p^{\diamond}(\tau)) \arrV^{***}_{p,z} \dd \tau$
\item Repeat ii., repeat i. (always updating the current values of $\arrX$, $\arrV$ $\rightarrow \arrX^{(n)}, \arrV^{+}$
\end{enumerate}
\item Full time step of source term in Ampere's law
$$
\tilde{\arrD}^{**} = \tilde{\arrD}^{**} + \tilde{\arrJ}^{(n)}
$$
\item Loop over particles and update particle $p$ as 
$$\arrV_p^{(n)} = \arrV_{p}^{+} + \frac{\Delta t}{2} \frac{w_p q_p}{m_p}  \matS^2(\arrX_p^{(n)}) \tilde{\matH}_2\tilde{\arrD}^{**}$$
\item Half time step of source-free Maxwell's equation
 $$	\arrB^n  = \arrB^* - \frac{\Delta t}{2}\matC \tilde{\matH}_2 \tilde{\arrD}^{**}, \quad
 	\tilde{\arrD}^n  = \mathbb{C}^\top  \matH_2 \arrB^*$$
\end{enumerate}
Note that $\arrX^{\diamond}$ has the time-dependent component as defined in the substep and uses the latest values of $\arrX^*$ along the other two directions.
\end{enumerate}

\section{Numerical experiments}

We have implemented the finite difference framework in our novel GEMPICX code that is based on the AMReX framework \cite{AMReX_JOSS}. Building on this framework we can leverage domain decomposition, performance portability and in future works adaptive mesh refinement and embedded boundary treatment of complex geometries.

\subsection{Hodge solver}

As a first step, we verify the order of the Hodge solvers $\matH_2$ and $\tilde{\matH}_2$ that determine the final order of the scheme as the other parts are exact. For this purpose, we consider the vector field $\bF(x,y,z) =	(\cos(x+y+z) \, -2\cos(x+y+z) \, \cos(x,y,z))^\top$ 
on the domain $[0,4\pi]^3$. We compute ${\arrF}_i = {\cR}_i(\bF)$, $\tilde{\arrF}_i = \tilde{\cR}_i(\bF)$, $i=1,2$. Note that we use Gauss--Legendre quadrature of the corresponding order to approximate the integrals in the restriction operation as also in all other numerical experiments, except for the Yee scheme where we increase the order, since the Hodge operator and the approximate restriction of the same order would be the same. Then, we apply the Hodges and compute the errors $e_1 = \|\arrF_1- \tilde{\matH}_2 \tilde{\arrF}_2\|_2$ and $e_2 = \|\tilde{\arrF}_1- {\matH}_2 \arrF_2\|$ for various grids and Hodge operator approximations. We consider the interpolation-histopolation Hodges of order 2, 4, 6, 8, 10 and 12 as well as the minimal stencil Hodges where we replace the point-to-integral Hodges with the minimal stencil-width versions.  Table \ref{tab:hodge} shows the absolute errors for various order of the Hodge as well as the resulting convergence orders. The order of the solver is recovered in all cases, noting that we hit round-off errors for orders 10 and 12 on the finest grid.

\begin{table}\caption{Absolute errors $e_1 = \|\arrF_1- \tilde{\matH}_2 \tilde{\arrF}_2\|_2$ and $e_2 = \|\tilde{\arrF}_1- {\matH}_2 \arrF_2\|$ on grids with $n$ cells per direction together with numerically observed convergence order (comparing the grid on the same line with the previous one) for Hodge operators of various orders. The first six Hodges are the natural ones while the operators $\matrh_0,\tilde{\matrh}_0$ are changed for the minimal width ones for the last two. Note that round-off errors are hit on finest grid for Hodges of order 10 and 12 (the corresponding entries have been striked out in the table).}\label{tab:hodge}
\begin{center}
\begin{tabular}{|c|rr|rr|rr|rr|}
\hline
 & \multicolumn{4}{c}{order 2} & \multicolumn{4}{|c|}{order 4} \\
n & $e_1$ & c.o. & $e_2$ & c.o. & $e_1$ & c.o. & $e_2$ & c.o.\\
\hline
16 &214. &      & 323.5 &  & 25.16 &  & 27.22 & \\
32 & 58.1 & 1.89 & 59.3 & 1.97  & 1.76 & 3.83 & 1.80 & 3.92\\
64 & 14.8 & 1.97 & 14.9 & 1.99 & $1.13 \cdot 10^{-1}$ & 3.96 & $1.14 \cdot 10^{-1}$ & 3.98\\
128 & 3.72  & 1.99 & 3.74 & 2.00 & $7.14 \cdot 10^{-3}$ &  4.00 & $7.14 \cdot 10^{-3}$ &  4.00\\
\hline
\hline
& \multicolumn{4}{c}{order 6} & \multicolumn{4}{|c|}{order 8} \\n & $e_1$ & c.o. & $e_2$ & c.o. & $e_1$ & c.o. & $e_2$ & c.o.\\
\hline
16 &  2.71 &  &  2.92 &  & $3.23 \cdot 10^{-1}$ & & $3.49 \cdot 10^{-1}$ & \\
32 &  $4.92 \cdot 10^{-2}$ & 5.78 & $5.01 \cdot 10^{-2}$ & 5.87 & $1.52 \cdot 10^{-3}$ & 7.73 &  $1.55 \cdot 10^{-3}$ & 7.81\\
64 & $7.97 \cdot 10^{-4}$ & 5.95 & $8.01 \cdot 10^{-4}$ & 5.97 & $6.23 \cdot 10^{-6}$ & 7.93  & $6.26 \cdot 10^{-6}$ & 7.95\\
128 & $1.26 \cdot 10^{-5}$ & 5.99 & $1.26 \cdot 10^{-5}$ & 5.99 & $2.46 \cdot 10^{-8}$ & 7.98 & $2.46 \cdot 10^{-8}$ & 7.99\\
\hline
\hline
& \multicolumn{4}{c}{order 10} & \multicolumn{4}{|c|}{order 12} \\n & $e_1$ & c.o. & $e_2$ & c.o. & $e_1$ & c.o. & $e_2$ & c.o.\\
\hline
16 & $4.05 \cdot 10^{-2}$ &  &  $4.38 \cdot 10^{-2}$ &  & $5.24 \cdot 10^{-3}$ & & $5.67 \cdot 10^{-3}$ & \\
32 &  $4.96 \cdot 10^{-5}$ & 9.67 & $5.06 \cdot 10^{-5}$ & 9.76 & $1.67 \cdot 10^{-6}$ & 11.61 &  $1.70 \cdot 10^{-6}$ & 11.70\\
64 & $5.13 \cdot 10^{-8}$ & 9.92 & $5.15 \cdot 10^{-8}$ & 9.94 & $4.10 \cdot 10^{-10}$ & 11.99  & $4.11 \cdot 10^{-10}$ & 12.01\\
128 & $7.58 \cdot 10^{-11}$ & 9.40 & \sout{$7.57 \cdot 10^{-11}$} & \sout{5.99} & \sout{$2.87 \cdot 10^{-11}$} & \sout{3.83} & \sout{$2.88 \cdot 10^{-11}$} & \sout{3.84}\\
\hline
\hline
 & \multicolumn{4}{c}{order 2 (Yee)} & \multicolumn{4}{|c|}{order 4 (minimal stencil)} \\
n & $e_1$ & c.o. & $e_2$ & c.o. & $e_1$ & c.o. & $e_2$ & c.o.\\
\hline
16 &73.2 &      & 79.2 &  & 6.87 &  & 7.44 & \\
32 & 19.5 & 1.91 & 19.9 & 2.00  & $4.74 \cdot 10^{-1}$ & 3.86 & $4.83 \cdot 10^{-1}$ & 3.94\\
64 & 4.95 & 1.98 & 4.97 & 2.00 & $3.04 \cdot 10^{-2}$ & 3.96 & $3.05 \cdot 10^{-2}$ & 3.99\\
128 & 1.24  & 1.99 & 1.24 & 2.00 & $1.91 \cdot 10^{-3}$ &  3.99 & $1.91 \cdot 10^{-3}$ &  4.00\\
\hline
\end{tabular}
\end{center}
\end{table}

\subsection{Electromagnetic waves in Vlasov--Maxwell system}

The advantage of a higher order Maxwell solver can be seen when looking at the numerical dispersion relation. Let us consider a two species simulation with reduced mass ratio $m_i/m_e = 10$ and $q_i = - q_e$. The initial conditions are Gaussians of the form
\begin{equation*}
f_s(\bx,\bv,t=0) = \frac{1}{\pi^{3/2} v_{th,s}^3}\exp \left(-\half\frac{|\bv|^2}{v_{th,s}} \right),
\end{equation*}
where $v_{th,e} = 0.05c$ and $v_{th,i} = \frac{0.05c}{\sqrt{10}}$. 
With a constant exterior field of strength $B_0$ along the $z$-axis, two electromagnetic waves, the so-called L- and R-modes, are triggered by the particle noise which have the respective dispersion relations of the form (see \cite[Sec. 3.4]{KilianDis})
\begin{equation}\label{eq:dispersion_lr}
	k^2 = \frac{\omega^2}{c^2} \left( 1 - \frac{\omega_{p}^2}{(\omega + \omega_{c,e})(\omega - \omega_{c,i})}\right), \quad 
k^2 = \frac{\omega^2}{c^2} \left( 1 - \frac{\omega_{p}^2}{(\omega - \omega_{c,e})(\omega + \omega_{c,i})}\right),
\end{equation}
where $\omega_{c,s} = \frac{q_s B_0}{m_s c}$.

We consider a quasi one-dimensional simulation with a domain of size $[0,64d_e] \times [0,d_e] \times [0,d_e]$, where $d_e$ is the electron inertial length, over a time of $T=200\omega_{p,e}^{-1}$. We use $128 \times 6 \times 6$ grid points for the field solvers and 400 particles per cell for both species generated by quasi-random numbers. The smoothing kernel is given by splines of degree $p=2$ and the time step is $\Delta t = 0.05\frac{1}{\omega_{pe}}$. 
Figure \ref{fig:lrmode} shows the numerical dispersion relation along the $x_1$ axis of $E_2$ (averaged over $x_2,x_3$) for the simulation with a Maxwell solver of order 2, 4, 6 and 12 together with the analytic dispersion relation \eqref{eq:dispersion_lr} for a quarter of the frequency space. It can be seen that the numerical dispersion is in good agreement with the analytic one for low wave number while it becomes worse with increasing wave number. For medium to high wave numbers, the accuracy of the numerical dispersion relation, with respect to the analytical one, improves as the order of the Maxwell solver increases.

\begin{figure}[ht]
\begin{subfigure}{.49\textwidth}
  \centering
  \includegraphics[width=\linewidth]{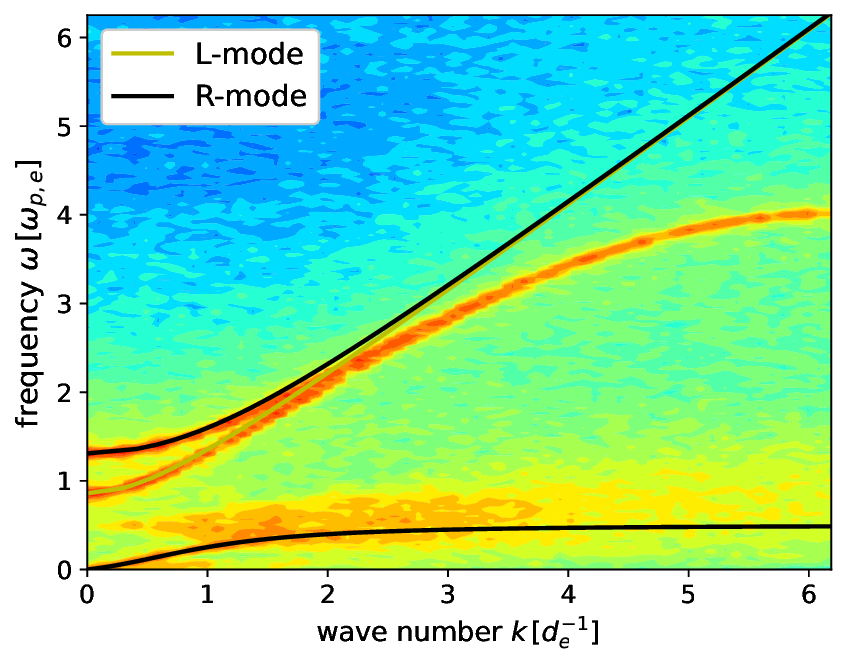}  
  \caption{Maxwell solver of order 2}
  \label{fig:nd_2}
\end{subfigure}
\begin{subfigure}{.49\textwidth}
  \centering
  \includegraphics[width=\linewidth]{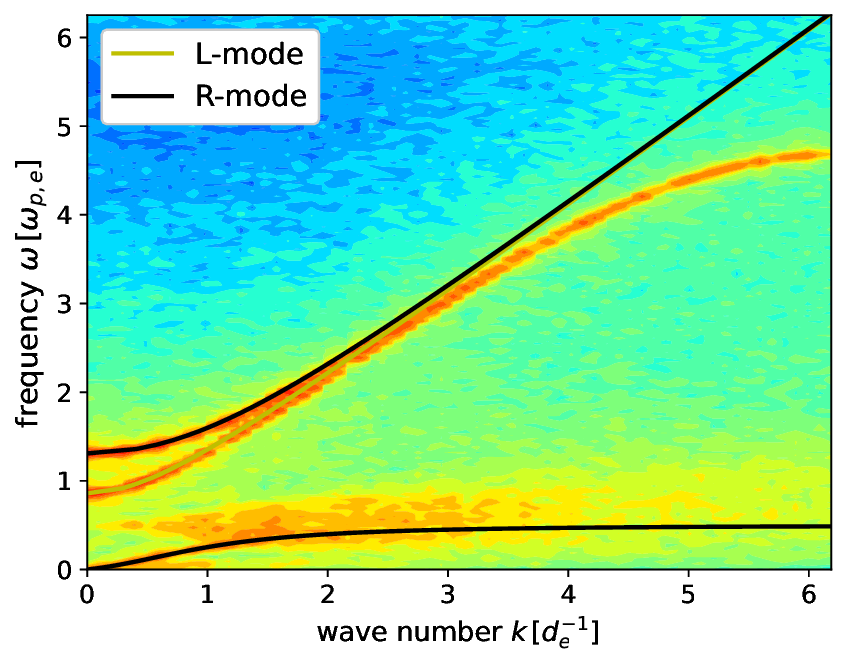}  
  \caption{Maxwell solver of order 4}
  \label{fig:nd_4}
\end{subfigure}
\begin{subfigure}{.49\textwidth}
  \centering
  \includegraphics[width=\linewidth]{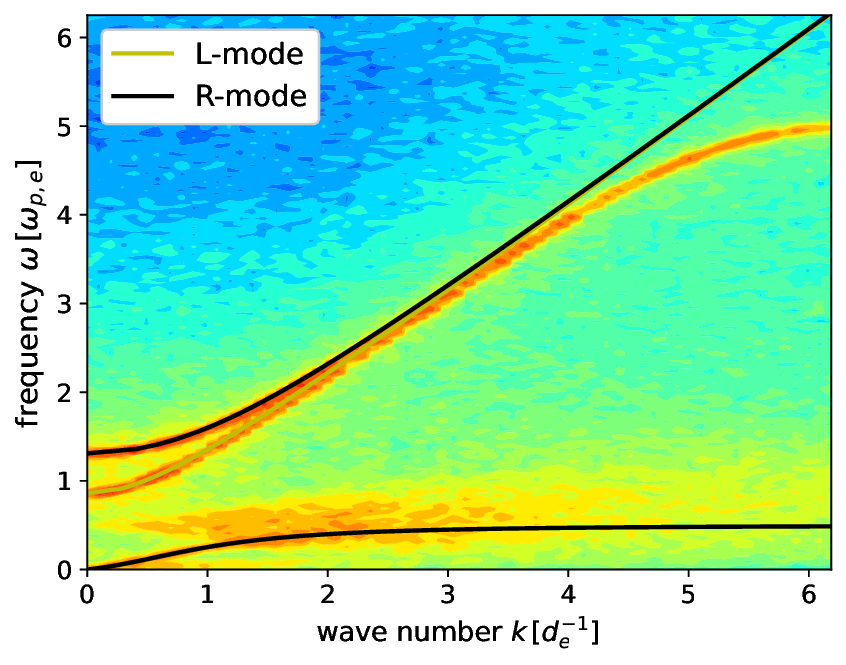}  
  \caption{Maxwell solver of order 6}
  \label{fig:nd_6}
\end{subfigure}
\begin{subfigure}{.49\textwidth}
  \centering
  \includegraphics[width=\linewidth]{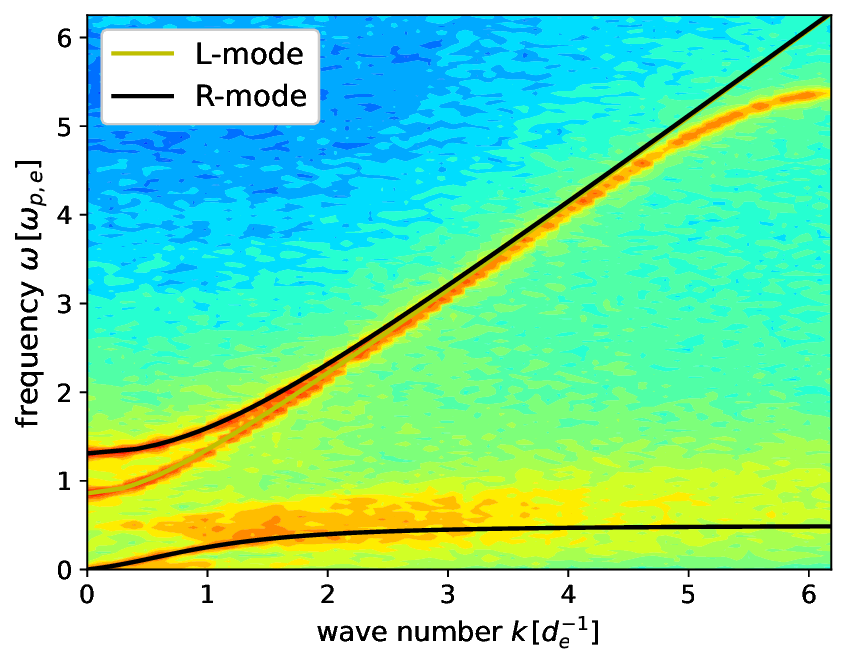}  
  \caption{Maxwell solver of order 12}
  \label{fig:nd_12}
\end{subfigure}
\caption{Comparison of the numerical dispersion relation for the L- and R-mode for varying order of the Maxwell solver (interpolation-histopolation Hodges).}
\label{fig:lrmode}
\end{figure}

\subsection{Weibel instablity in Vlasov--Maxwell system}

We now consider the Vlasov--Maxwell system in 3d3v phase-space with the following initial value of the distribution function for the electron distribution (in a neutralizing ion background)
\begin{equation*}
f(\bx,\bv,t=0) = \frac{1}{\pi \sigma_1 \sigma_2^2} \exp \left( - \half \left( \frac{v_1^2}{\sigma_1^2} + \frac{v_2^2 + v_3^2}{\sigma_2^2} \right) \right)
\end{equation*}
and vanishing initial magnetic field on the domain $[0, \frac{2\pi}{k}]^3$. This is known as the Weibel instability test case. We choose the following parameters: $\sigma_1 = \frac{0.02}{\sqrt{2}}$, $\sigma_2 = \sqrt{12}\sigma_1$, $k=1.25$, and $\beta = 10^{-4}$.

\subsubsection{Conservation properties}

As a first step, we numerically verify the conservation properties of our code with simulations on a $7 \times 7 \times 7$ points, 2000 particles per cell. The fully discrete algorithm conserves Gauss' law. In Figure \ref{fig:gauss}, we show the time evolution of $\|\nabla \bD(\bx,t) - \rho(\bx,t)\|_2/\|\rho(\bx,t)\|_2$. We can see that Gauss' law is indeed conserved to round off precision. In these experiments, we have used a time step of $\Delta t = 0.02$.

Next we consider conservation of energy. Energy is conserved at the semi-discrete level but, since we are choosing a Poisson integrator, energy conservation is lost on the fully discrete level. 
Figure \ref{fig:energy} shows the maximum energy error over all time steps on the time interval $[0,500]$ as a function of the temporal step size $\Delta t$ for a number of choices for the Maxwell solver and degree of the shape functions. We see that in all cases---in particular also with the minimal width stencil---the second order of the used Strang splitting scheme is recovered.

\begin{figure}[ht]
\begin{subfigure}{.49\textwidth}
  \centering
  \includegraphics[width=\linewidth]{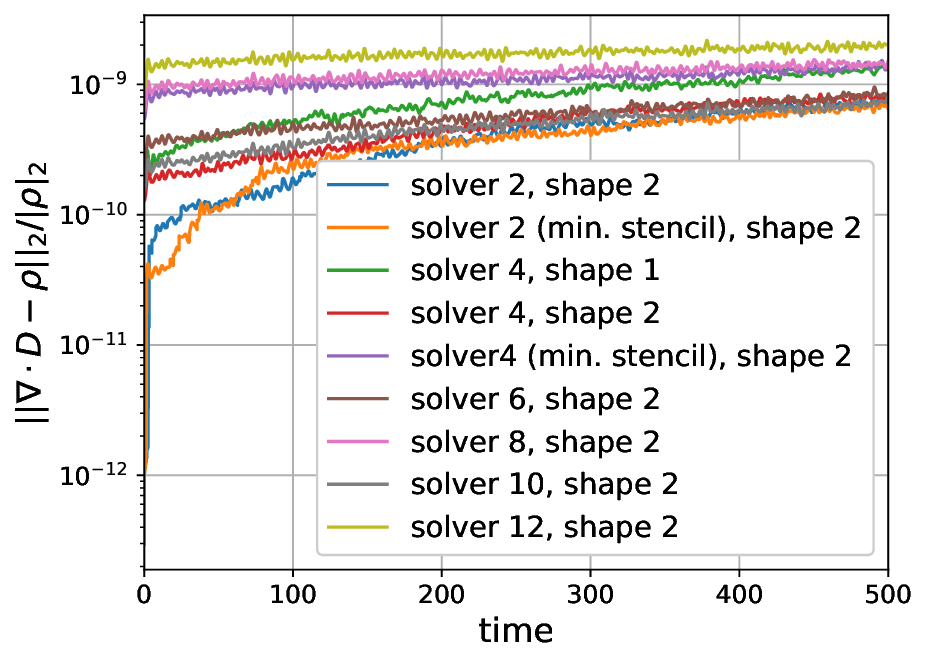}
  \caption{Conservation of Gauss' law}
  \label{fig:gauss}
\end{subfigure}
\begin{subfigure}{.49\textwidth}
  \centering
  \includegraphics[width=\linewidth]{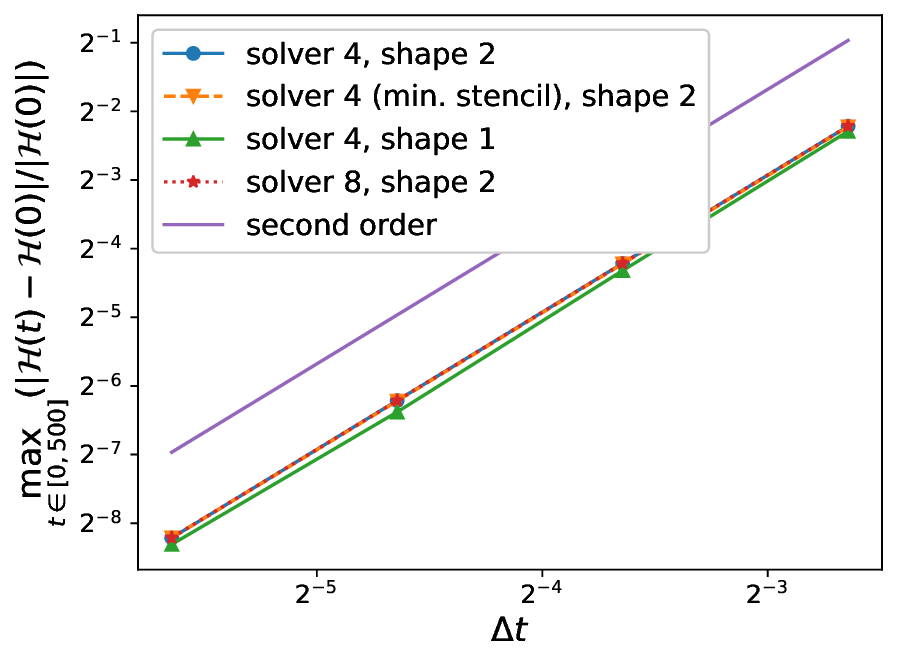}
  \caption{Energy conservation}
  \label{fig:energy}
\end{subfigure}
\caption{Visualization of the conservation properties of the code for the Weibel instability test case.}
\label{fig:weibel}
\end{figure}

\subsubsection{Degree of the shape function}

In this series of experiments we want to demonstrate the influence of the degree of the spline that is chosen as a shape function. We fix the other parameters as follows: rev{fourth} order Maxwell solver, time step of size $\Delta t = 0.02$. For the grid and particle resolution, we consider two sets of experiments: in the first set we use a coarse grid of $7\times 7 \times 7$ grid points with 2000 particles per cell. On the other hand, we use a finer grid of $15 \times 15 \times 15$ grid points but with only 200 particles-per-cell. Figure \ref{fig:order_shape_function} shows the magnetic energy curves for the two sets of resolution with varying degree of the shape function spline. Figure \ref{fig:order_shape_777} shows that the numerical results slightly damp the growth rate for all degrees of the smoothing spline. When fitting a growth rate to the data in the time interval $[100,200]$, the fitted growth rates are decreasing slightly from $0.0240$ for degree 1 to $0.0226$ for degree 4 compared to the analytical growth rate of $0.02784$, indicating a slight advantage of lower order splines in this case. 
In Figure \ref{fig:order_shape_151515}, on the other hand, we see that for this very low particle resolution a smoothing is necessary and higher degree splines yield a better result. We can conclude that particle smoothing by higher degree shape functions is necessary when the number of particles-per-cell is low, i.e. a high numerical noise is present. On the other hand, the smoothing introduced by higher degree shape functions can yield a damping of the growth rate which is why at high particle resolution it is preferable to have localized shape functions.

\begin{figure}[ht]
\begin{subfigure}{.49\textwidth}
  \centering
  \includegraphics[width=\linewidth]{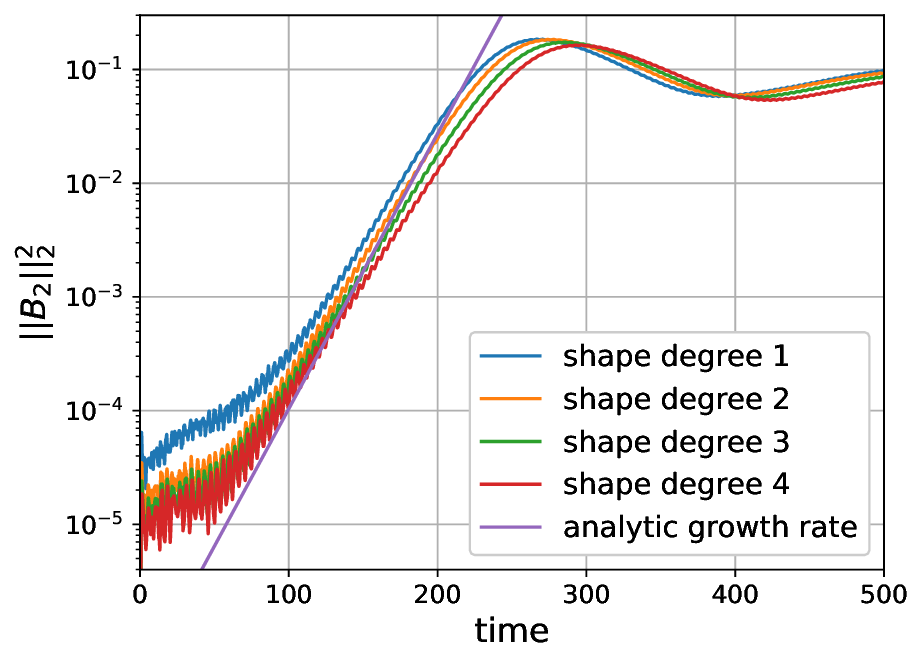}  
  \caption{Grid: $7\times 7 \times 7$, 2000 ppc}
  \label{fig:order_shape_777}
\end{subfigure}
\begin{subfigure}{.49\textwidth}
  \centering
  \includegraphics[width=\linewidth]{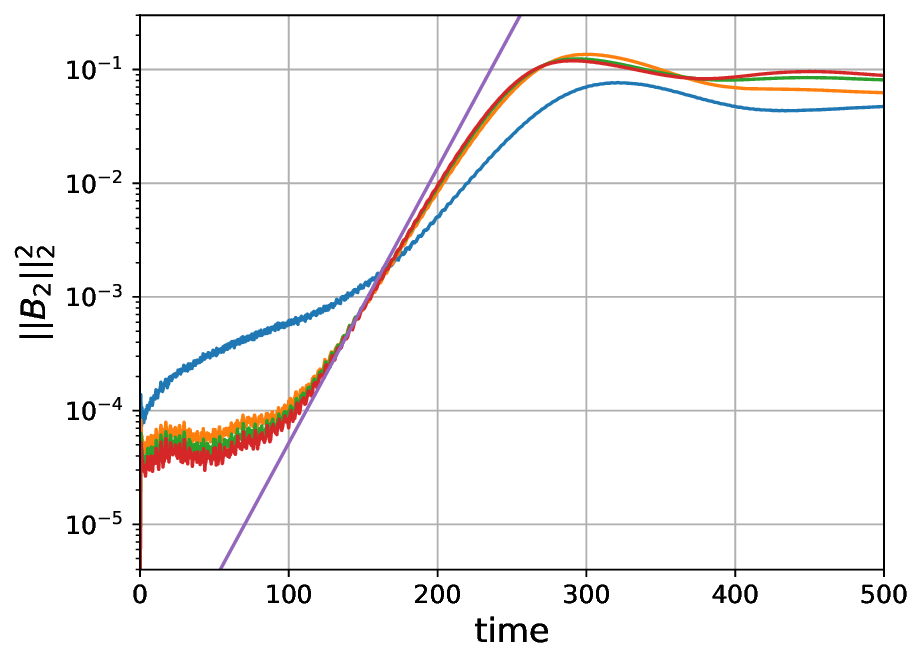}  
  \caption{Grid: $15\times 15 \times 15$, 200 ppc}
  \label{fig:order_shape_151515}
\end{subfigure}
\caption{Magnetic energy curves (second component) for various degree of the shape function (see legend) for two different resolutions (see subcaptions).}
\label{fig:order_shape_function}
\end{figure}

\section{Conclusion}

We have extended in this paper the framework of the geometric electromagnetic Particle In Cell method that has initially been developed in the context of Finite Element Exterior Calculus to mimetic Finite Differences on dual staggered grids. We defined an approximation of the electromagnetic fields on both meshes. The Amp\`ere and Faraday equations hold exactly each on one grid with one representation of the fields. The approximation order comes from a discrete Hodge operator that maps the fields on one grid to the corresponding fields on the other grid. These Hodge operators were obtained by using sliding Lagrange interpolation and the related histopolations.

We introduced an action principle using appropriate Finite Difference degrees of freedom and where the particle mesh coupling term appears in the form of reduction operators which are dual to the discrete potentials. 
The variations of this action principle yield a semi-discrete noncanonical Hamiltonian system, for which we could prove that a nontrivial Jacobi identity was verified. We also showed that the discrete divergence constraint for Maxwell's equations are automatically satisfied for all times provided they are satisfied at the initial time as they are Casimir invariants of the discrete Poisson bracket.

The new scheme was tested on relevant numerical experiments highlighting in particular, the conservation properties and the advantages of high order Maxwell solvers and spline shape functions. 

Our construction of mimetic finite differences and the structure-pre\-ser\-ving discretization on staggered grid provides a theoretical framework to show that existing finite difference codes based on the Yee scheme and Gauss law conserving current deposition are geometric and offers the possibility to extend them to high-order geometric discretization methods. Moreover, it enabled us to build our formulation in the AMReX framework and leverage in the future not only their high-performance particle module but also features like adaptive mesh refinement and embedded boundaries. Compared to the finite element version of the GEMPIC algorithm, no matrices need to be inverted in the propagation, since with the choice of $\tilde{\arrD}$ and $\arrB$ for the degrees of freedom, Hodge inversion only appears in the stationary Poisson equation, that only needs to be solved at the initial time.

\textit{Acknowledgement:} We would like to thank Martin Campos Pinto for valuable discussions and all the contributors to the GEMPICX code. This version of the manuscript contains corrections compared to the published version for which Martin  is also gratefully acknowledged.\\This work has been carried out within the framework of the EUROfusion Consortium, funded by the European Union via the Euratom Research and Training Programme (Grant Agreement No 101052200 -- EUROfusion). Views and opinions expressed are however those of the authors only and do not necessarily reflect those of the European Union or the European Commission. Neither the European Union nor the European Commission can be held responsible for them.
KK acknowledges funding from the Deutsche Forschungsgemeinschaft (DFG) via the Collaborative Research Center SFB1491 Cosmic Interacting Matters---From Source to Signal.

\bibliographystyle{abbrv}

\begin{thebibliography}{10}

\bibitem{ArnoldFEEC2018}
D.~N. Arnold.
\newblock {\em Finite Element Exterior Calculus}.
\newblock Society for Industrial and Applied Mathematics, Philadelphia, PA,
  2018.

\bibitem{Arnold.Falk.Winther.2006.anum}
D.~N. Arnold, R.~S. Falk, and R.~Winther.
\newblock Finite element exterior calculus, homological techniques, and
  applications.
\newblock {\em Acta Numerica}, 15:1--155, 2006.

\bibitem{Arnold.Falk.Winther.2010.bams}
D.~N. Arnold, R.~S. Falk, and R.~Winther.
\newblock {Finite Element Exterior Calculus: from Hodge theory to numerical
  stability}.
\newblock {\em Bulletin of the American Mathematical Society}, 47:281--354,
  2010.

\bibitem{bochev2006principles}
P.~B. Bochev and J.~M. Hyman.
\newblock Principles of mimetic discretizations of differential operators.
\newblock In {\em Compatible spatial discretizations}, pages 89--119. Springer,
  2006.

\bibitem{CPKS_variational_2020}
M.~Campos~Pinto, K.~Kormann, and E.~Sonnendr\"{u}cker.
\newblock Variational framework for structure-preserving electromagnetic
  particle-in-cell methods.
\newblock {\em J. Sci. Comput.}, 91(2):Paper No. 46, 39, 2022.

\bibitem{Crouseilles:2015}
N.~Crouseilles, L.~Einkemmer, and E.~Faou.
\newblock {Hamiltonian} splitting for the {Vlasov--{M}axwell} equations.
\newblock {\em Journal of Computational Physics}, 283:224--240, 2015.

\bibitem{fornberg1990}
B.~Fornberg.
\newblock High-order finite differences and the pseudospectral method on
  staggered grids.
\newblock {\em SIAM Journal on Numerical Analysis}, 27(4):904--918, 1990.

\bibitem{he2015hamiltonian}
Y.~He, H.~Qin, Y.~Sun, J.~Xiao, R.~Zhang, and J.~Liu.
\newblock Hamiltonian time integrators for vlasov-{M}axwell equations.
\newblock {\em Physics of Plasmas}, 22(12):124503, 2015.

\bibitem{Hiptmair2002}
R.~Hiptmair.
\newblock Finite elements in computational electromagnetism.
\newblock {\em Acta Numerica}, 11:237--339, 2002.

\bibitem{hirvijoki2020}
E.~Hirvijoki, K.~Kormann, and F.~Zonta.
\newblock {Subcycling of particle orbits in variational, geometric
  electromagnetic particle-in-cell methods}.
\newblock {\em Physics of Plasmas}, 27(9):092506, 09 2020.

\bibitem{KilianDis}
P.~Kilian.
\newblock {\em Teilchenbeschleunigung an kollisionsfreien Schockfronten}.
\newblock PhD thesis, Julius-Maximilians-Universit{\"a}t W{\"u}rzburg, 2015.

\bibitem{kormann2020energy}
K.~Kormann and E.~Sonnendr{\"u}cker.
\newblock Energy-conserving time propagation for a structure-preserving
  particle-in-cell {V}lasov--{M}axwell solver.
\newblock {\em Journal of Computational Physics}, 425:109890, 2020.

\bibitem{kraus2016gempic}
M.~Kraus, K.~Kormann, P.~J. Morrison, and E.~Sonnendr{\"u}cker.
\newblock {GEMPIC}: Geometric electromagnetic particle-in-cell methods.
\newblock {\em Journal of Plasma Physics}, 83(4), 2017.

\bibitem{kreeft2011}
J.~Kreeft, A.~Palha, and M.~Gerritsma.
\newblock Mimetic framework on curvilinear quadrilaterals of arbitrary order.
\newblock 2011.

\bibitem{Low:1958.royal}
F.~E. Low.
\newblock {A Lagrangian Formulation of the Boltzmann-Vlasov Equation for
  Plasmas}.
\newblock In {\em Proceedings of the Royal Society of London. Series A,
  Mathematical and Physical Sciences}, volume 248, pages 282--287, 1958.

\bibitem{Palha:2014}
A.~Palha, P.~P. Rebelo, R.~Hiemstra, J.~Kreeft, and M.~Gerritsma.
\newblock {Physics-compatible discretization techniques on single and dual
  grids, with application to the Poisson equation of volume forms}.
\newblock {\em Journal of Computational Physics}, 257:1394--1422, 2014.

\bibitem{Perse2021}
B.~Perse, K.~Kormann, and E.~Sonnendr\"{u}cker.
\newblock Geometric particle-in-cell simulations of the {V}lasov--{M}axwell
  system in curvilinear coordinates.
\newblock {\em SIAM Journal on Scientific Computing}, 43(1):B194--B218, 2021.

\bibitem{Qin:2015}
H.~Qin, Y.~He, R.~Zhang, J.~Liu, J.~Xiao, and Y.~Wang.
\newblock {Comment on ``Hamiltonian splitting for the Vlasov--{M}axwell
  equations''}.
\newblock {\em Journal of Computational Physics}, 297:721--723, 2015.

\bibitem{robidoux2008}
N.~Robidoux.
\newblock Polynomial histopolation, superconvergent degrees of freedom, and
  pseudospectral discrete hodge operators.
\newblock {\em Unpublished: http://www. cs. laurentian.
  ca/nrobidoux/prints/super/histogram. pdf}, 2008.

\bibitem{vincenti2016}
H.~Vincenti and J.-L. Vay.
\newblock Detailed analysis of the effects of stencil spatial variations with
  arbitrary high-order finite-difference maxwell solver.
\newblock {\em Computer Physics Communications}, 200:147--167, 2016.

\bibitem{AMReX_JOSS}
W.~Zhang, A.~Almgren, V.~Beckner, J.~Bell, J.~Blaschke, C.~Chan, M.~Day,
  B.~Friesen, K.~Gott, D.~Graves, M.~Katz, A.~Myers, T.~Nguyen, A.~Nonaka,
  M.~Rosso, S.~Williams, and M.~Zingale.
\newblock {AMReX}: a framework for block-structured adaptive mesh refinement.
\newblock {\em Journal of Open Source Software}, 4(37):1370, May 2019.

\end{thebibliography}

\end{document}